\theoremstyle{thmstyleone}
\newtheorem{theo}{Theorem}
\newtheorem{prop}{Proposition}
\newtheorem{rem}{Remark}
\newtheorem*{rems}{Remarks}
\newtheorem{cor}{Corollary}
\newtheorem{lem}{Lemma}
\theoremstyle{thmstylethree}
\def\R{{\mathbb R}}
\def\N{{\mathbb N}}
\def\M{{\mathbb M}}
\begin{document}
	
	\title[\small Homogeneous incompressible Bingham viscoplastic as a limit of bi-viscosity fluids]{Homogeneous incompressible Bingham viscoplastic as a limit of bi-viscosity fluids}
	
	\author*[1,2]{\fnm{Wassim} \sur{Aboussi}}\email{aboussi@math.univ-paris13.fr}
	
	\author[1]{\fnm{Fayssal} \sur{Benkhaldoun}}\email{fayssal@math.univ-paris13.fr}
	
	\author[3]{\fnm{Ahmed} \sur{Aberqi}}\email{aberqi\_ahmed@yahoo.fr}
	
	\author[4]{\fnm{Abdallah} \sur{Bradji}}\email{abdallah.bradji@gmail.com}
	
	\author[2]{\fnm{Jaouad} \sur{Bennouna}}\email{jbennouna@hotmail.com}
	
	\affil*[1]{ Laboratory LAGA, CNRS, UMR 7539, Sorbonne Paris Nord University, Villetaneuse, France}
	
	\affil[2]{ Laboratory LAMA, Sidi Mohamed Ben Abdellah University, Faculty of Sciences Dhar El Mahraz, Fez, Morocco}
	
	\affil[3]{ Laboratory LAMA, Sidi Mohamed Ben Abdellah University, National School of Applied Sciences, Fez, Morocco}
	
	\affil[4]{ Laboratory LMA, University of Annaba, Faculty of Sciences, Annaba, Algeria}
	
	%%==================================%%
	%% sample for unstructured abstract %%
	%%==================================%%
	\abstract{ In this paper, the existence of a weak solution for homogeneous incompressible Bingham fluid is investigated. The rheology of such a fluid is defined by a yield stress $\tau_y$ and a discontinuous stress-strain law. {\color{black}This non-Newtonian fluid} behaves like a solid at low stresses and like a non-linear fluid above the yield stress.
	In this work we propose to build a weak solution for Navier stokes Bingham equations using a bi-viscosity fluid as {\color{black}an} approximation, in particular, we proved that the bi-viscosity tensor converges weakly to the Bingham tensor. This choice allowed us to show the existence of solutions for a given data $f\in L^2(0,T;V')$.}
	
	\keywords{Incompressible non-Newtonian fluid, Non-Newtonian fluid approximation, weak solution, Navier-Stokes systems, Bingham viscoplastic, existence of solutions.}
	
	%%\pacs[JEL Classification]{D8, H51}
	
	\pacs[MSC Classification(2020)]{76A05, 35Q30, 76B03, 76N06.}
	
	\maketitle
	\section{Introduction}
	\label{intro}
	As well known, the motion of a homogeneous incompressible fluid is governed by the Navier-Stokes system,
	which describes the balance of mass and momentum. The classical form of this equation is restricted to fluids
	whose stress-strain relationship is linear. This category of fluids is called Newtonian fluids. They have a
	simple molecular structure, e.g., water, air, and alcohol. The mathematical analysis of the Newtonian Navier
	Stokes equations {\color{black}are} one of the leading research topics that attract the attention of researchers because
	of the many open questions around this system (see \cite{Ref5, Ref9, Ref18,Ref19}).
 
 To study more complex fluids, such as molten plastics, synthetic {\color{black}fibres}, biological fluids, paints, and greases, etc., it is necessary to consider a generalized Navier Stokes system that models the behavior of fluids whose viscosity depends on the rate of deformation (i.e., non-Newtonian fluids). This complex behavior is translated into a mathematical complexity which gives rise to complex stress-strain laws, such as the Carreau-Yasuda, Bingham, power law, Cross, Casson, Herschel-Bulkley, etc., for more details on the rheology and the non-Newtonian models, consult \cite{Ref6, Ref12, Ref13}. \color{black} A rigorous \color{black} mathematical existence theory for non-Newtonian fluids can be found in \cite{Ref11}. 
Among the various classes of non-Newtonian materials, those exhibiting viscoplastic properties are particularly
interesting by their ability to strain only if the stress rate exceeds a minimum value. Many industrial processes
	involve viscoplastic fluid: mud, cement slurries, emulsions, foams, etc... The most commonly used model to account
	for this particular behavior is the Bingham model \cite{Ref1}. Eugene Bingham gave the initial mathematical
	expression in 1922 for one-dimensional flows.
	Later, Prager \cite{Ref15, Ref17} showed a generalized tensor formulation for multidimensional flows.
	From an analytical and numerical viewpoint, we cannot directly study the Navier Stokes Bingham problem since the stress tensor
	is unexplicit below the yield stress, moreover is a discontinuous operator (which prevents the use of \cite{Ref2})\color{black}.
	Duvaut and Lions \cite{Ref14} exclude the stress tensor by passing to a variational inequality for the velocity field to overcome these difficulties.
	Another solution was proposed by Basov and Shelukhin \cite{Ref8}, they proved the existence of weak solutions of the nonhomogeneous incompressible equation by using the Bercovier and Engelman model \cite{Ref20} as an approximation of the Bingham fluid.
	In \cite{Ref7}, Shelukhin used the same approach but with a different approximate tensor.
 
	Our work is based on the approximation of the Bingham tensor by the bi-viscosity tensor, which can be used for numerical simulation (see \cite{Ref3, Ref6, Ref16, Ref23}). Other regularization choices are possible, such as the Papanastasiou model \cite{Ref21} or the algebraic model proposed by Allouche et al. \cite{Ref22}.
	The reasons behind our choice is that the bi-viscosity operator is coercive, growing, monotonic and continuous,
	which are the conditions of an existence theorem given by \cite{Ref2}. The idea is to construct a sequence of approximate solutions using
	the bi-viscosity regularization and the theorem 1 \cite{Ref2}, then pass to the limit to prove the existence of a weak solution.
 
	In section \ref{sec:1}, we give the setting of the problem and the functional spaces, then we present our theorem and we give
	some remarks about the weak formulation. The proof is shared over three sections; the first step is provided in section \ref{sec:3}, where we
	propose an approximate problem and obtain a sequence of approximate solutions. The aim of section \ref{sec:4} is to prove various compactness
	results on the approximate solutions. Section \ref{sec:5} is devoted to passing to the limit in the approximate problem; in particular, \color{black} we prove that the bi-viscosity tensor converges weakly to the Bingham tensor\color{black}. In the last section, we prove the uniqueness of solutions.
	\section{Setting of the problem and main result}\label{sec:1}
	Let $\Omega$ be a smooth domain in $\R^2$ with Lipschitz boundary and $\Omega_T$ the open set $\Omega \times (0,T)$, where $T>0$ is the final time.
	\\ We consider an unsteady flow of incompressible Bingham fluid in 2D which is governed by the following Navier-Stokes system
	\begin{equation}\label{P}
	\left\{
	\begin{array}{cc}
	\displaystyle \partial_t u+(u\cdot\nabla)u-\nabla\cdot (\tau(Du))+\nabla p= f\hspace{20 pt}& \text{in }\Omega_T,\\
	\\
	\displaystyle \nabla\cdot  u=0 & \text{in }\Omega_T\cdot\\
	\end{array}
	\right.
	\end{equation}
	Here{\color{black},} $u$ is the velocity vector, $p$ is the pressure, and $\tau$ is the stress tensor where the strain tensor (shear tensor) is defined as $$ Du=\frac{1}{2}(\nabla u+\nabla u^t),$$ and $f:\Omega_T \rightarrow \R^2$
	represents the external forces (such as gravity). \color{black} The system \eqref{P} is equipped with the following initial condition \color{black}
	\begin{equation}\label{initial_data}
	u(\cdot,0)=u_0 \ \ \ \ \  \text{in } \Omega,
	\end{equation}
	\color{black}
	and the homogeneous Dirichlet boundary condition \color{black}
	\begin{equation}\label{boundary_data}
	u=0 \ \ \ \  \text{on } \partial\Omega\times(0,T)\cdot
	\end{equation}
	The Bingham stress–strain constitutive law is defined as
	\begin{equation}\label{Bingham_tensor}
	\left\{
	\begin{array}{ccc}
	\tau(Du)=\left(2\mu+\frac{\tau_y}{\mid Du \mid }\right)Du & & \text{if } \lvert \tau\rvert>\tau_y,\\
	\\
	Du=0   & &\text{if } {\color{black}\lvert \tau\rvert} \leq \tau_y\cdot
	\end{array}
	\right.
	\end{equation}
	Here{\color{black},} $\mu$ is the viscosity, $\tau_y$ is the yield stress and ${\color{black}\lvert A\rvert}^2=A:A${\color{black},} where the inner product is defined as
	$\displaystyle A:B=\sum_{i,j}^{}A_{ij}B_{ij}$. The Bingham tensor can be written as follows:
	\begin{equation}\label{Bingham_tensor2}
	\left\{
	\begin{array}{ccc}
	\tau(Du)=\left(2\mu+\frac{\tau_y}{{\color{black}\lvert Du \rvert} }\right)Du & & \text{if } Du \neq 0,\\
	\\
	{\color{black}\lvert \tau\rvert} \leq \tau_y   & &\text{if } Du=0\cdot
	\end{array}
	\right.
	\end{equation}
	Let us choose some spaces. Let $X$ be a Banach space, for each $1\leq p<\infty$, we defined the following function spaces :
	$$\displaystyle H=\left\{v \in L^2(\Omega) {\color{black},}\ \nabla\cdot v=0,\ \ \ \ v\cdot n \mid _{\partial \Omega }=0  \right\},$$ $$
	\displaystyle V=\left\{v \in H^1_0(\Omega) {\color{black},}\ \ \nabla\cdot v=0\right\}\cdot$$
	These two spaces are Hilbert spaces equipped with the scalar products respectively induced by
	those of $L^2(\Omega,\R^2)$ and of $H^1_0(\Omega,\R^2)$, i.e
	$$ {\color{black}\lVert v\rVert}_{H}^2=\int_{\Omega}^{} {\color{black}\lvert v\rvert} ^2 dx  \text{\hspace{20pt}and \hspace{17pt}}
	{\color{black}\lVert v\rVert}_{V}^2=\int_{\Omega}^{} {\color{black}\lvert \nabla v\rvert} ^2 dx,$$
	We also use the following Bochner spaces: {\color{black}$${L^p(0,T;X)=\left\{v \text{ measurable from (0,T) into $X$} ,\ {\color{black}\lVert v\rVert}_{L^p(0,T;X)}^p < \infty \right\},}$$
	$$ L^{\infty}(0,T;X)=\left\{v \text{ measurable from (0,T) into $X$} ,\ {\color{black}\lVert v\rVert}_{L^{\infty}(0,T;X)} < \infty \right\},$$
	where $\displaystyle {\color{black}\lVert v\rVert}_{L^p(0,T;X)}^p=\int_{0}^{T}{\color{black}\lVert v\rVert}_X^{p} $ and $\displaystyle {\color{black}\lVert v\rVert}_{L^{\infty}(0,T;X)}=\underset{t\in(0,T)}{\text{supess}}  {\color{black}\lVert v\rVert}_X$.}
	The space $E_{2,2}(V)=\left\{v \in L^2(0,T;V) ,\ \partial_t v \in L^2(0,T;V') \right\},$
	is a Banach space equipped with the norm $${\color{black}\lVert v\rVert}_{E_{2,2}} = {\color{black}\lVert v\rVert}_{L^2(0,T,V)}+ {\color{black}\lVert \partial_t v\rVert}_{L^2(0,T,V')} \cdot$$
	Where $V'$ is the topological dual of $V$, and we denote by $\langle \cdot , \cdot \rangle$ \color{black} the duality bracket
	between $V$ and $V'$.\color{black}\\
	
	\color{black}
	As in \cite{Ref2}, we call $(u,\tau(Du))\in E_{2,2} \times L^2(\Omega_T)$ a weak solution of the problem \eqref{P}-\eqref{Bingham_tensor}{\color{black},} if $u$ satisfies \eqref{initial_data} \color{black} and for all $\varphi\in L^2(0,T;V)$ we have 
	\begin{equation}\label{weak_formulation1}
	\int_{0}^{T}\langle \partial_tu,\varphi \rangle +\int_{\Omega_T}^{}\tau(Du):D\varphi\ +\int_{\Omega_T}^{}(u\cdot\nabla)u\cdot \varphi\ =
	\int_{0}^{T}\langle f,\varphi \rangle\ \cdot
	\end{equation}
	A similar formulation is given in \cite{Ref5}{\color{black},} for the Navier Stokes equation in 2D.\\
	
	\color{black}
	The main result of this work is the following theorem{\color{black}.}
	\begin{theo}\label{main_result}
		Assume that $f\in L^2(0,T;V')$ and $u_0 \in H$, then the Navier Stokes equation for a Bingham fluid \eqref{P}-\eqref{Bingham_tensor}{\color{black},}
		has \color{black} a weak solution \color{black} such that
		$$ u\in L^2(0,T;V)\cap L^{\infty}(0,T;H),\ \ \ \partial_tu\in L^2(0,T;V'),\ \ \ \tau(Du)\in L^2(\Omega_T).$$
	\end{theo}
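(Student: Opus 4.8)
\noindent\emph{Outline of the proof.} The plan is to build the solution as a limit of bi-viscosity approximations. For $n\in\N$ I introduce the bi-viscosity tensor
\[
\tau_n(A)=\left\{
\begin{array}{ll}
\left(2\mu+\dfrac{\tau_y}{\lvert A\rvert}\right)A & \text{if }\lvert A\rvert>\tau_y/n,\\[2mm]
(2\mu+n)\,A & \text{if }\lvert A\rvert\le\tau_y/n,
\end{array}\right.
\]
which is continuous, monotone, coercive (indeed $\tau_n(A):A\ge 2\mu\lvert A\rvert^2$) and of linear growth; hence the abstract existence result (Theorem~1 of \cite{Ref2}) applies to the Navier--Stokes system with stress $\tau_n$ and yields, for each $n$, a function $u_n\in E_{2,2}$ with $u_n(0)=u_0$, $\tau_n(Du_n)\in L^2(\Omega_T)$, solving \eqref{weak_formulation1} with $\tau$ replaced by $\tau_n$. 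The three remaining steps are: $n$-uniform estimates, compactness, and passage to the limit — the last containing the only genuinely delicate point, the identification of the limiting stress.

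First I would derive $n$-uniform a priori bounds. Taking $\varphi=u_n$ in the approximate weak formulation kills the convective term by incompressibility, and the coercivity of $\tau_n$ together with Gronwall gives $u_n$ bounded in $L^\infty(0,T;H)\cap L^2(0,T;V)$. Since $\lvert\tau_n(A)\rvert\le 2\mu\lvert A\rvert+\tau_y$ for \emph{every} $A$ and every $n$, the stresses $\tau_n(Du_n)$ stay bounded in $L^2(\Omega_T)$; estimating $(u_n\cdot\nabla)u_n$ in $L^2(0,T;V')$ by the two-dimensional Ladyzhenskaya inequality and rearranging the equation bounds $\partial_t u_n$ in $L^2(0,T;V')$. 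Along a subsequence I get $u_n\rightharpoonup u$ in $L^2(0,T;V)$ and weak-$\ast$ in $L^\infty(0,T;H)$, $\partial_t u_n\rightharpoonup\partial_t u$ in $L^2(0,T;V')$, $\tau_n(Du_n)\rightharpoonup\chi$ in $L^2(\Omega_T)$, and by Aubin--Lions $u_n\to u$ strongly in $L^2(\Omega_T)$. Passage to the limit is routine in the linear terms; in the convective term I write $\int_{\Omega_T}(u_n\cdot\nabla)u_n\cdot\varphi=-\int_{\Omega_T}(u_n\cdot\nabla)\varphi\cdot u_n$, use strong $L^2$ convergence for smooth divergence-free $\varphi$, and extend to all $\varphi\in L^2(0,T;V)$ by density via the uniform $L^2(0,T;V')$ bound on the convective terms. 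This gives \eqref{weak_formulation1} with $\chi$ in place of $\tau(Du)$, and $u(0)=u_0$ follows from $u_n(0)=u_0$ and weak convergence in $E_{2,2}$.

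The heart of the proof is to show that $\chi$ is an admissible Bingham stress, i.e.\ that $\chi=2\mu Du+\lambda$ with $\lvert\lambda\rvert\le\tau_y$ a.e.\ and $\lambda:Du=\tau_y\lvert Du\rvert$ a.e.; by the equality case of Cauchy--Schwarz this is precisely \eqref{Bingham_tensor2}. Writing $\tau_n(Du_n)=2\mu Du_n+\lambda_n$ with $\lvert\lambda_n\rvert\le\tau_y$ pointwise, the weak limit $\lambda:=\chi-2\mu Du$ satisfies $\lvert\lambda\rvert\le\tau_y$ a.e.\ and, pointwise, $\lambda:Du\le\tau_y\lvert Du\rvert$; only the reverse (integrated) inequality is missing. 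To obtain it I would compare two energy identities: testing the approximate equation with $u_n$ gives $\frac12\lVert u_n(T)\rVert_H^2+\int_{\Omega_T}\tau_n(Du_n):Du_n=\frac12\lVert u_0\rVert_H^2+\int_0^T\langle f,u_n\rangle$, while testing the limit equation with $u$ (legitimate since $u\in E_{2,2}$) gives $\frac12\lVert u(T)\rVert_H^2+\int_{\Omega_T}\chi:Du=\frac12\lVert u_0\rVert_H^2+\int_0^T\langle f,u\rangle$. Since $u_n(T)\rightharpoonup u(T)$ in $H$, weak lower semicontinuity yields $\limsup_n\int_{\Omega_T}\tau_n(Du_n):Du_n\le\int_{\Omega_T}\chi:Du$. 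On the other hand the elementary pointwise bound $\tau_n(A):A\ge 2\mu\lvert A\rvert^2+\tau_y\lvert A\rvert-\tau_y^2/n$, together with weak lower semicontinuity of $v\mapsto\int_{\Omega_T}\lvert v\rvert^2$ and of $v\mapsto\int_{\Omega_T}\lvert v\rvert$, gives $2\mu\int_{\Omega_T}\lvert Du\rvert^2+\tau_y\int_{\Omega_T}\lvert Du\rvert\le\int_{\Omega_T}\chi:Du$; subtracting $2\mu\int_{\Omega_T}\lvert Du\rvert^2$ and using $\chi=2\mu Du+\lambda$ leaves $\tau_y\int_{\Omega_T}\lvert Du\rvert\le\int_{\Omega_T}\lambda:Du$, hence $\lambda:Du=\tau_y\lvert Du\rvert$ a.e. Thus $(u,\chi)$ is the required weak solution, with $\tau(Du)=\chi\in L^2(\Omega_T)$, $u\in L^2(0,T;V)\cap L^\infty(0,T;H)$ and $\partial_t u\in L^2(0,T;V')$.

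I expect the main obstacle to be exactly this identification of the stress: the Bingham operator $A\mapsto 2\mu A+\tau_y\,\partial\lvert A\rvert$ is maximal monotone but multivalued and discontinuous at $A=0$, so the standard Minty device — perturbing $Du$ along a ray and letting the parameter vanish — fails on the rigid zone $\{Du=0\}$, whereas the energy-comparison argument above avoids hemicontinuity altogether and uses only the uniform bound $\lvert\lambda_n\rvert\le\tau_y$ and the convexity of $A\mapsto\tau_y\lvert A\rvert$. Secondary technical checks are: that $u_n(T)\rightharpoonup u(T)$ in $H$ (from the $\partial_t u_n$ bound and continuity of $v\mapsto v(T)$ on $E_{2,2}$), that the energy identities are valid for $E_{2,2}$ functions, that the constants in the estimates and in the hypotheses of Theorem~1 of \cite{Ref2} are genuinely independent of $n$, and the density step used in the convective term.
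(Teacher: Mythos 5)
Your proposal is correct and follows the paper's overall skeleton — bi-viscosity regularization (your $\tau_n$ is the paper's family in a different parametrization), existence of approximants via Theorem~1 of \cite{Ref2}, $n$-uniform energy estimates, Aubin--Lions compactness, and routine passage to the limit in all terms except the stress — but the identification of the limiting stress $\chi$, which both you and the paper single out as the only delicate point, is done by a genuinely different argument. The paper first upgrades to \emph{strong} convergence of $Du_m$ in $L^2(\Omega_T)$: it shows $\int_{\Omega_T}(\tau_m(Du_m)-\tau_m(Du)):(Du_m-Du)\to 0$ by testing with $u_m-u$, then invokes the uniform strong monotonicity $(\tau_m(A)-\tau_m(B)):(A-B)\ge 2\mu\lvert A-B\rvert^2$ to get $Du_m\to Du$ in $L^2$ and a.e.; the identification is then pointwise, split over $\{Du\neq 0\}$ (continuity of the Bingham branch plus Vitali's theorem) and over the rigid zone $\{Du=0\}$ (a measure-theoretic contradiction argument showing $\lvert\xi\rvert\le\tau_y$ there). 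You instead never use strong convergence of the strains: you exploit the decomposition $\tau_n(Du_n)=2\mu Du_n+\lambda_n$ with the uniform pointwise bound $\lvert\lambda_n\rvert\le\tau_y$ (which does hold for the inner branch since $n\lvert A\rvert\le\tau_y$ there), pass the constraint $\lvert\lambda\rvert\le\tau_y$ to the weak limit by convexity, and close the argument by comparing the approximate and limit energy identities with weak lower semicontinuity of $v\mapsto\int\lvert v\rvert^2$ and $v\mapsto\int\lvert v\rvert$, ending with the equality case of Cauchy--Schwarz to recover \eqref{Bingham_tensor2}. Both routes hinge on the same energy inequality $\limsup_n\int_{\Omega_T}\tau_n(Du_n):Du_n\le\int_{\Omega_T}\chi:Du$ (equivalently $\limsup\int\tau_m(Du_m):D(u_m-u)\le 0$), and both require the check that $u_n(T)\rightharpoonup u(T)$ (resp. $u_m(0)\rightharpoonup u(0)$) in $H$. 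What your convex-analysis argument buys is a shorter and more robust rigid-zone treatment — it avoids the paper's somewhat intricate splitting into the sets $A^1_m,A^2_m,A^3_m$ and the contradiction on $\mathrm{meas}(A)$ — and it would survive regularizations without the quantitative monotonicity lower bound; what the paper's argument buys in exchange is the strong $L^2$ and a.e.\ convergence of $Du_m$, a stronger conclusion that is reused elsewhere. Your pointwise lower bound $\tau_n(A):A\ge 2\mu\lvert A\rvert^2+\tau_y\lvert A\rvert-\tau_y^2/n$ is verified by a discriminant computation on the inner branch, so I see no gap; the secondary checks you list (validity of the energy identity on $E_{2,2}$, $n$-independence of constants, density in the convective term) are all standard and are handled the same way in the paper.
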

	\vspace{-1cm}
	\begin{rems}
		\begin{enumerate}
			\item Theorem \ref{main_result}{\color{black},} ensure the existence of a classical weak solution $(u, p)\in E_{2,2}\times \mathcal{D'}(\Omega_T)$, for the system
			\eqref{P}-\eqref{Bingham_tensor}. Indeed, if we define the distribution {\color{black}
			$ T=\partial_t u+(u\cdot\nabla)u-\nabla\cdot (\tau(Du))-f$,}
			according to \eqref{weak_formulation1}, we can take $ \varphi \in \{\mathcal{D}(\Omega_T){\color{black},} \nabla\cdot \varphi=0\} $, and we have
			$\langle T,\varphi \rangle=0$.
			On the other hand, \color{black} the De Rham theorem\footnote{A constructive proof of the theorem is given by Simon in \cite{Ref10}.} \cite[p.~114]{Ref24} \color{black} ensures the existence of
			a primitive of any distribution that cancels on all test functions with null divergence (see \cite[th. IV.2.5]{Ref5}). Then, we obtain the
			existence of $p\in \mathcal{D'}(\Omega_T)$ where $T=-\nabla p$, which implies the existence of functions $(u, p)$ solution of \eqref{P}-\eqref{Bingham_tensor} in $\mathcal{D'}(\Omega_T)$.
			\item   We note that this weak formulation is different from the one proposed in \cite{Ref7}, where $f$ must belong to $L^2(\Omega_T)$, but in our case, $f$ belongs to $L^2(0,T,V')$.
			\item The Lions-Magenes theorem \cite{Ref5}{\color{black},} implies that the weak solution $u$ is continuous from $[0,T]$ into $H$.
		\end{enumerate}
	\end{rems}
	\section{Approximate solutions}\label{sec:3}
	In this section, we will build an approximate problem by regularizing the Bingham tensor \eqref{Bingham_tensor}{\color{black},} with another operator that approximates the physical behavior of Bingham fluids and has some analytical properties. The regularizing tensor is given by the bi-viscosity model :
	\begin{equation}\label{biv_tensor}
	\tau_m(A)=\left\{
	\begin{array}{cc}
	2m\mu A   &\text{if } {\color{black}\lvert A\rvert}  \leq \gamma_m,\\
	\\
	\left(2\mu+\frac{\tau_y}{{\color{black}\lvert A\rvert}  }\right)A & \text{if } {\color{black}\lvert A\rvert}  >\gamma_m\cdot
	\end{array}
	\right.
	\end{equation}
	Where $A\in\M^{2\times 2} $ and $\displaystyle \gamma_m=\frac{\tau_y}{2\mu(m-1)},\ m\geq 2$. The idea of this approximation is to consider
	the Bingham fluid when ${\color{black}\lvert \tau\rvert} \leq\tau_y$ (which is practically solid) as {\color{black}a} highly viscous Newtonian fluid, by involving a second artificial viscosity $\mu_m=m\mu$. Therefore, the equation \eqref{weak_formulation} can be viewed as an approximation of \eqref{weak_formulation1}.
 \vspace{-0.5 cm}
	\begin{theo}\label{approx_solutions}
		Assume that $f\in L^2(0,T;V')$ and $u_0 \in H$, then the approximate problem \eqref{P}-\eqref{boundary_data}, \eqref{biv_tensor}, has at least a solution $u_m\in E_{2,2}$ in the following sense :
		  \begin{equation}\label{weak_formulation}
		\int_{0}^{T}\langle \partial_tu_m,\varphi \rangle +\int_{\Omega_T}^{}\tau_m(Du_m):D\varphi +\int_{\Omega_T}^{}(u_m\cdot\nabla)u_m\cdot \varphi  =\int_{0}^{T}\langle f,\varphi \rangle,
		\end{equation}
		{\color{black} for all $\varphi\in L^2(0,T;V)$.} Moreover, $u_m$ is continuous from $[0,T]$ into $H$.
	\end{theo}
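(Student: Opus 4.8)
The plan is to rewrite the approximate problem \eqref{weak_formulation} as an abstract evolution equation
\begin{equation*}
\partial_t u_m + A_m(u_m) + B(u_m) = f \ \text{ in } L^2(0,T;V'), \qquad u_m(0)=u_0,
\end{equation*}
where $A_m\colon V\to V'$ is the nonlinear diffusion operator defined by $\langle A_m(u),\varphi\rangle=\int_{\Omega}\tau_m(Du):D\varphi$ and $B$ is the usual Navier--Stokes convective operator $\langle B(u),\varphi\rangle=\int_{\Omega}(u\cdot\nabla)u\cdot\varphi$. The theorem will then follow from Theorem~1 of \cite{Ref2} once one checks that $\tau_m$ (hence $A_m$) satisfies the four structural hypotheses quoted in the introduction --- coercivity, growth, monotonicity and continuity --- the convective term being handled exactly as in the Newtonian two-dimensional case (antisymmetry $b(u,v,v)=0$ on $V$, so that it does not affect coercivity, together with the Ladyzhenskaya estimate $|b(u,v,w)|\le C\|u\|_H^{1/2}\|u\|_V^{1/2}\|v\|_V\|w\|_H^{1/2}\|w\|_V^{1/2}$ in dimension two).

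\textbf{Coercivity, growth, continuity.} First I would record the pointwise properties of $\tau_m$ directly from \eqref{biv_tensor}. When $|A|\le\gamma_m$ one has $\tau_m(A):A=2m\mu|A|^2$, and when $|A|>\gamma_m$ one has $\tau_m(A):A=2\mu|A|^2+\tau_y|A|$; in either case $\tau_m(A):A\ge 2\mu|A|^2$, which with the identity $\int_{\Omega}|Du|^2=\tfrac12\|u\|_V^2$ on $V$ (or Korn's inequality) gives the coercivity $\langle A_m(u),u\rangle\ge \mu\|u\|_V^2$. For the growth, using $\gamma_m=\tau_y/(2\mu(m-1))$ one verifies that $2\mu|A|+\tau_y\le 2m\mu|A|$ whenever $|A|>\gamma_m$, so that $|\tau_m(A)|\le 2m\mu|A|$ for every $A$; hence $A_m$ sends bounded subsets of $L^2(0,T;V)$ into bounded subsets of $L^2(\Omega_T)\subset L^2(0,T;V')$, and in particular any solution satisfies $\tau_m(Du_m)\in L^2(\Omega_T)$ with $\|\tau_m(Du_m)\|_{L^2(\Omega_T)}\le 2m\mu\|Du_m\|_{L^2(\Omega_T)}$. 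Continuity (indeed local Lipschitz continuity) of the map $A\mapsto\tau_m(A)$, and hence hemicontinuity of $A_m$, reduces to the junction $|A|=\gamma_m$, where the two branches agree since $\big(2\mu+\tfrac{\tau_y}{\gamma_m}\big)A=\big(2\mu+2\mu(m-1)\big)A=2m\mu A$.

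\textbf{Monotonicity.} The main point to check carefully is that $\tau_m$ is a monotone operator on $\M^{2\times2}$. I would exhibit a convex potential: write $\tau_m=\nabla_A\Psi_m$ with $\Psi_m(A)=\mu|A|^2+\phi_m(|A|)$, where $\phi_m(r)=(m-1)\mu r^2$ for $0\le r\le\gamma_m$ and $\phi_m(r)=\tau_y r-\tfrac{\tau_y\gamma_m}{2}$ for $r>\gamma_m$. The relation $\gamma_m=\tau_y/(2\mu(m-1))$ is exactly what makes $\phi_m$ of class $C^1$ with $\phi_m'$ continuous and non-decreasing ($\phi_m'(r)=2(m-1)\mu r$ for $r<\gamma_m$, $\phi_m'(r)=\tau_y$ for $r>\gamma_m$, matching at $r=\gamma_m$), so $\phi_m$ is convex and non-decreasing; therefore $A\mapsto\phi_m(|A|)$ is convex, $\Psi_m$ is a convex $C^1$ function, and $\tau_m=\nabla\Psi_m$ is monotone, i.e. $(\tau_m(A)-\tau_m(B)):(A-B)\ge0$. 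With coercivity, growth, continuity and monotonicity established, the hypotheses of \cite[Theorem~1]{Ref2} are met, which produces a solution $u_m\in E_{2,2}$ of \eqref{weak_formulation} satisfying $u_m(0)=u_0$; the Lions--Magenes embedding $E_{2,2}\hookrightarrow C([0,T];H)$ then gives the stated continuity in time.

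\textbf{Expected obstacle.} The delicate part is not the abstract machinery but the verification that $\tau_m$ fits the framework of \cite{Ref2} \emph{verbatim}: the $C^1$ gluing of $\Psi_m$ across $\gamma_m$ and the resulting monotonicity, together with checking that the linear-growth/quadratic-coercivity exponents are those required there. Should the cited theorem not apply directly, the very same structural properties permit a self-contained Faedo--Galerkin construction: a priori energy estimates yielding the $L^\infty(0,T;H)\cap L^2(0,T;V)$ bound, a bound on $\partial_t u_m$ in $L^2(0,T;V')$, Aubin--Lions compactness, and Minty's monotonicity trick to identify the weak limit of $\tau_m(Du)$ of the Galerkin approximations.
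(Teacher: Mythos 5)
Your proposal is correct and follows the same skeleton as the paper: both reduce the theorem to checking the hypotheses (NS0)--(NS2) of Theorem~1 in \cite{Ref2} (continuity, growth, coercivity, monotonicity of $\tau_m$), with continuity secured by the matching of the two branches at $\lvert A\rvert=\gamma_m$ and coercivity from $\tau_m(A):A\geq 2\mu\lvert A\rvert^2$. The one genuinely different step is monotonicity. The paper proves it by a direct three-case computation ($\lvert A\rvert,\lvert B\rvert\leq\gamma_m$; both $>\gamma_m$; mixed), using the Cauchy--Schwarz inequality, and extracts along the way the quantitative bound $(\tau_m(A)-\tau_m(B)):(A-B)\geq 2\mu\lvert A-B\rvert^2$ (its Lemma~\ref{q}), which is reused later in the Minty-type argument of Section~\ref{sec:5}. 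You instead exhibit the convex $C^1$ potential $\Psi_m(A)=\mu\lvert A\rvert^2+\phi_m(\lvert A\rvert)$ with $\tau_m=\nabla\Psi_m$; this is cleaner and avoids the case analysis, and note that it actually gives you more than the $\geq 0$ you state: since $\Psi_m-\mu\lvert\cdot\rvert^2$ is convex, you get exactly the paper's strong monotonicity $(\tau_m(A)-\tau_m(B)):(A-B)\geq 2\mu\lvert A-B\rvert^2$, hence in particular the \emph{strict} monotonicity $>0$ for $A\neq B$ that \cite{Ref2} requires --- you should state that explicitly rather than stopping at mere monotonicity. Two further minor points: your growth bound $\lvert\tau_m(A)\rvert\leq 2m\mu\lvert A\rvert$ is valid but $m$-dependent, whereas the paper's $\lvert\tau_m(A)\rvert\leq\tau_y+2\mu\lvert A\rvert$ is uniform in $m$ --- irrelevant for this theorem at fixed $m$, but the uniform version is the one needed for the compactness estimates of Section~\ref{sec:4}; and your fallback Faedo--Galerkin sketch is a reasonable insurance policy but is not needed, since \cite{Ref2} applies directly with $n=p=2$.
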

 \vspace{-0.5 cm}
 \begin{proof}
		This result is an application of theorem 1, proved by Dreyfuss and Hungerbühler in \cite{Ref2}, in other words, we will check the hypotheses (NS0)-(NS2) given in \cite{Ref2}.
       
		Clearly, $\tau_m$ satisfies (NS0) since it is a continuous function, which justifies the choice of $\gamma_m$. {\color{black} It is easy to prove that $\displaystyle \tau_m(A):A\geq 2\mu {\color{black}\lvert A\rvert} ^2$ and that $\displaystyle {\color{black}\lvert \tau_m(A) \rvert} \leq \tau_y+2\mu{\color{black}\lvert A\rvert} $, so $\tau_m$ satisfied the growth and the coercive hypotheses (NS1).}\\
		To prove the strict monotonicity of $\tau_m$, i.e.
		$\displaystyle \left( \tau_m(A)-\tau_m(B) \right):(A-B) > 0,$\\
          $ \forall A\neq B\in \mathbb{M}^{2\times 2},$
		we distinguish three cases: \\
		\textbf{Case 1: } if $\displaystyle {\color{black}\lvert A\rvert} \leq \gamma_m$, $\displaystyle {\color{black}\lvert B \rvert} \leq \gamma_m$ and $A \neq B$, then
		\begin{equation}\label{cas1}
		\left( \tau_m(A)-\tau_m(B) \right):(A-B) =2m\mu{\color{black}\lvert A-B \rvert} ^2>0\cdot
		\end{equation}
		\textbf{Case 2: } if $\displaystyle {\color{black}\lvert A\rvert} > \gamma_m$, $\displaystyle {\color{black}\lvert B \rvert} > \gamma_m$ and $A \neq B$ so
		\color{black}
		\begin{align*}
			\left( \tau_m(A)-\tau_m(B) \right):(A-B) &=\left( \left(\frac{\tau_y}{{\color{black}\lvert A\rvert} }+2\mu \right)A -\left(\frac{\tau_y}{{\color{black}\lvert B \rvert} }+2\mu \right)B \right):(A-B) \\
			&\hspace{-1cm}=2\mu {\color{black}\lvert A-B \rvert} ^2+\tau_y{\color{black}\lvert A\rvert} +\tau_y{\color{black}\lvert B \rvert} -\left(\frac{\tau_y}{{\color{black}\lvert A\rvert} }+\frac{\tau_y}{{\color{black}\lvert B \rvert} }\right)A:B\cdot
		\end{align*}
		By using the Cauchy–Schwarz inequality, we obtain
		$$ \left( \tau_m(A)-\tau_m(B) \right):(A-B) \geq 2\mu {\color{black}\lvert A-B \rvert} ^2+\tau_y{\color{black}\lvert A\rvert} +\tau_y{\color{black}\lvert B \rvert}$$ $$\hspace{3.5cm}-\left(\frac{\tau_y}{{\color{black}\lvert A\rvert} }+\frac{\tau_y}{{\color{black}\lvert B \rvert} }\right){\color{black}\lvert A\rvert} {\color{black}\lvert B \rvert} ,$$
		and we find
		\begin{equation}\label{cas2}
		\left( \tau_m(A)-\tau_m(B) \right):(A-B) \geq 2\mu {\color{black}\lvert A-B \rvert} ^2 >0\cdot
		\end{equation}
		\color{black}
		\textbf{Case 3: } if $\displaystyle {\color{black}\lvert A\rvert} > \gamma_m$ and $\displaystyle {\color{black}\lvert B \rvert} \leq \gamma_m$, so
		\begin{align*}
			(\tau_m(A)-\tau_m(B))&:(A-B)= \left(\left(2\mu+\frac{\tau_y}{{\color{black}\lvert A\rvert} } \right)A-2m\mu B \right):(A-B)& &\\
			& =\left(\left(2m\mu+\frac{\tau_y}{{\color{black}\lvert A\rvert} } \right)A-2m\mu B-2\mu(m-1)A\right):(A-B) & &\\
			& =2m\mu{\color{black}\lvert A-B \rvert} ^2+\left(\frac{\tau_y}{{\color{black}\lvert A\rvert} }-2\mu(m-1)\right)A:(A-B)\cdot& &
		\end{align*}
		On the other hand, we have $\displaystyle{\color{black}\lvert A\rvert} > \frac{\tau_y}{2\mu(m-1)}$, which gives, in addition to the Cauchy–Schwarz inequality :         
		{\color{black}\begin{equation}\label{cas3}
		\begin{split}
		\left( \tau_m(A)-\tau_m(B) \right)&:(A-B)\geq 2\mu{\color{black}\lvert A-B \rvert} ^2\\
		&+2(m-1)\mu{\color{black}\lvert A-B \rvert} \left({\color{black}\lvert A-B \rvert} +\frac{\tau_y}{2\mu(m-1)}-{\color{black}\lvert A\rvert} \right)\cdot 
		\end{split}
		\end{equation}}
		We also have $\displaystyle {\color{black}\lvert A-B \rvert} +\frac{\tau_y}{2\mu(m-1)}-{\color{black}\lvert A\rvert} \geq 0$, {\color{black} then
		$ \left( \tau_m(A)-\tau_m(B) \right):(A-B) >0\cdot $}
		Finally, we can apply {\color{black}Theorem} 1 of \cite{Ref2}, with $n=p=2$.
	\end{proof}
 \vspace{-0.5 cm}
	\begin{lem}\label{q}
		Form \eqref{cas1}, \eqref{cas2} and \eqref{cas3}{\color{black},} we deduce the following inequality
		\begin{equation}
		\left( \tau_m(A)-\tau_m(B) \right):(A-B) \geq 2\mu{\color{black}\lvert A-B \rvert} ^2, \hspace{30pt} \forall A,B \in \mathbb{M}^{2\times 2}\cdot 
		\end{equation}
		This inequality will be used somewhere in this paper.
	\end{lem}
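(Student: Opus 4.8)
The plan is to collect the three case estimates already obtained in the proof of Theorem~\ref{approx_solutions} and to observe that each of them dominates $2\mu\lvert A-B\rvert^2$. First, if $A=B$ both sides vanish and the inequality is trivial, so we may assume $A\neq B$.

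In \textbf{Case 1} ($\lvert A\rvert\leq\gamma_m$, $\lvert B\rvert\leq\gamma_m$), identity \eqref{cas1} gives $(\tau_m(A)-\tau_m(B)):(A-B)=2m\mu\lvert A-B\rvert^2$, and since $m\geq 2\geq 1$ this is at least $2\mu\lvert A-B\rvert^2$. In \textbf{Case 2} ($\lvert A\rvert>\gamma_m$, $\lvert B\rvert>\gamma_m$), estimate \eqref{cas2} already asserts the bound $\geq 2\mu\lvert A-B\rvert^2$. \textbf{Case 3} splits into the two sub-cases $\lvert A\rvert>\gamma_m\geq\lvert B\rvert$ and $\lvert B\rvert>\gamma_m\geq\lvert A\rvert$; since the left-hand side is symmetric under exchanging $A$ and $B$, it suffices to treat the first, where \eqref{cas3} gives
\begin{equation*}
\left(\tau_m(A)-\tau_m(B)\right):(A-B)\geq 2\mu\lvert A-B\rvert^2+2(m-1)\mu\lvert A-B\rvert\left(\lvert A-B\rvert+\frac{\tau_y}{2\mu(m-1)}-\lvert A\rvert\right).
\end{equation*}
Because $\lvert B\rvert\leq\gamma_m=\frac{\tau_y}{2\mu(m-1)}$, the reverse triangle inequality $\lvert A\rvert-\lvert B\rvert\leq\lvert A-B\rvert$ yields $\lvert A-B\rvert+\frac{\tau_y}{2\mu(m-1)}-\lvert A\rvert\geq\lvert A-B\rvert+\lvert B\rvert-\lvert A\rvert\geq 0$, so the last summand is nonnegative and the claimed bound follows.

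Hence in every case $(\tau_m(A)-\tau_m(B)):(A-B)\geq 2\mu\lvert A-B\rvert^2$, which is the assertion of the lemma. The only point requiring a little care is the non-negativity of the bracketed term in Case~3, which rests on combining $\lvert B\rvert\leq\gamma_m$ with the reverse triangle inequality; all the remaining steps are immediate consequences of the identities and inequalities established above. In particular there is no genuine obstacle here: the lemma is essentially a bookkeeping corollary of the monotonicity computation, its only purpose being to record the uniform constant $2\mu$ (independent of $m$) for later use in the compactness arguments.
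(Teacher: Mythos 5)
Your proof is correct and takes essentially the same route as the paper, which deduces the lemma directly from the three case estimates \eqref{cas1}, \eqref{cas2} and \eqref{cas3} established in the proof of Theorem~\ref{approx_solutions} (your justification of the non-negativity of the bracketed term in Case~3 via $\lvert B\rvert\leq\gamma_m$ and the reverse triangle inequality is exactly the observation the paper makes there). Nothing is missing.
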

	\section{Compactness of approximate solutions}\label{sec:4}
	The aim of this section is to prove some results on the sequence $u_m$.
 \vspace{-0.5 cm}
	\begin{prop}\label{estimations}
		The approximate solution $u_m$, constructed in Section~\ref{sec:3}{\color{black},} satisfied the following estimations \begin{itemize}
			\item [(i) ] The sequence $\displaystyle u_m$ is bounded in $\displaystyle L^2(0,T;V)\cap L^{\infty}(0,T;H)$.
			\item [(ii) ] The sequence $\displaystyle (u_m\cdot\nabla)u_m$ is bounded in $\displaystyle L^2(0,T;V')$.
			\item [(iii) ] The sequence $\displaystyle\tau_m (Du_m)$ is bounded in $\displaystyle L^2(\Omega_T)$.
			\item [(iv) ] The sequence $\displaystyle \partial_t u_m$ is bounded in $\displaystyle L^2(0,T;V')$.
		\end{itemize}
	\end{prop}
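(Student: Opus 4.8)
The plan is to extract all four bounds from a single energy estimate obtained by testing the approximate weak formulation \eqref{weak_formulation} with the solution itself, and then to read off the remaining bounds from the structural properties of $\tau_m$ established in Section~\ref{sec:3} and from the two-dimensional form of the convective term. For (i), since $u_m\in E_{2,2}$ the Lions–Magenes integration-by-parts lemma permits taking $\varphi=u_m$ (cut off at an arbitrary time $t\in(0,T)$) as a test function, giving $\int_0^t\langle\partial_t u_m,u_m\rangle=\tfrac12\|u_m(t)\|_H^2-\tfrac12\|u_0\|_H^2$, while the convective contribution vanishes, $\int_{\Omega\times(0,t)}(u_m\cdot\nabla)u_m\cdot u_m=0$, because $u_m$ is divergence-free and vanishes on $\partial\Omega$. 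Invoking the coercivity $\tau_m(A):A\geq 2\mu|A|^2$ from (NS1) together with the identity $\int_\Omega|Du_m|^2=\tfrac12\int_\Omega|\nabla u_m|^2$ valid for divergence-free fields in $H^1_0(\Omega)$, and bounding $\langle f,u_m\rangle\leq\|f\|_{V'}\|u_m\|_V\leq\tfrac{\mu}{2}\|u_m\|_V^2+\tfrac1{2\mu}\|f\|_{V'}^2$, we absorb the gradient term and obtain
\[
\tfrac12\|u_m(t)\|_H^2+\tfrac{\mu}{2}\int_0^t\|u_m(s)\|_V^2\,ds\;\leq\;\tfrac12\|u_0\|_H^2+\tfrac1{2\mu}\|f\|_{L^2(0,T;V')}^2\qquad\text{for a.e. }t\in(0,T).
\]
As the right-hand side is independent of $m$ and $t$, this gives (i).

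For (ii), integrating by parts to move the derivative onto the test function and using Ladyzhenskaya's inequality $\|v\|_{L^4(\Omega)}^2\leq C\|v\|_{L^2(\Omega)}\|\nabla v\|_{L^2(\Omega)}$ in dimension two, one gets, for $\varphi\in V$, $\big|\int_\Omega(u_m\cdot\nabla)u_m\cdot\varphi\big|\leq\|u_m\|_{L^4(\Omega)}^2\|\nabla\varphi\|_{L^2(\Omega)}\leq C\|u_m\|_H\|u_m\|_V\|\varphi\|_V$, hence $\|(u_m\cdot\nabla)u_m\|_{V'}\leq C\|u_m\|_H\|u_m\|_V$; squaring and integrating over $(0,T)$ gives $\|(u_m\cdot\nabla)u_m\|_{L^2(0,T;V')}^2\leq C\|u_m\|_{L^\infty(0,T;H)}^2\|u_m\|_{L^2(0,T;V)}^2$, finite by (i). For (iii), the growth estimate $|\tau_m(A)|\leq\tau_y+2\mu|A|$ from (NS1) gives the pointwise bound $|\tau_m(Du_m)|^2\leq 2\tau_y^2+8\mu^2|Du_m|^2$, so $\|\tau_m(Du_m)\|_{L^2(\Omega_T)}^2\leq 2\tau_y^2|\Omega_T|+8\mu^2\|Du_m\|_{L^2(\Omega_T)}^2$, bounded by (i). For (iv), reading \eqref{weak_formulation} as an identity in $V'$ yields, for $\varphi\in V$,
\[
\langle\partial_t u_m,\varphi\rangle=\langle f,\varphi\rangle-\int_\Omega(u_m\cdot\nabla)u_m\cdot\varphi-\int_\Omega\tau_m(Du_m):D\varphi,
\]
and since $\big|\int_\Omega\tau_m(Du_m):D\varphi\big|\leq\|\tau_m(Du_m)\|_{L^2(\Omega)}\|\varphi\|_V$, the three terms on the right are controlled in $L^2(0,T;V')$ by the hypothesis on $f$, by (ii), and by (iii) respectively; thus $\partial_t u_m$ is bounded in $L^2(0,T;V')$.

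The only genuinely delicate points are the rigorous justification of the energy identity for a function with merely $E_{2,2}$ regularity and the vanishing of the trilinear convective term, together with the use of the two-dimensional Ladyzhenskaya inequality needed in the estimate for (ii). Once the estimate of the first step is secured, parts (ii)–(iv) are essentially structural consequences of the growth condition on $\tau_m$ and of the equation itself, so the energy estimate is the heart of the proof.
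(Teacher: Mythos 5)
Your proof is correct and follows essentially the same route as the paper: an energy estimate obtained by testing with $u_m$ (cut off in time), using Lions--Magenes, the coercivity of $\tau_m$, the vanishing of the trilinear term and Young's inequality for (i), then Ladyzhenskaya's inequality for (ii), the growth bound on $\tau_m$ for (iii), and reading $\partial_t u_m$ off the equation for (iv). The only cosmetic differences are that you obtain the $L^2(0,T;V)$ and $L^{\infty}(0,T;H)$ bounds in a single pass (the paper does two), and you use the identity $\int_\Omega\lvert Du\rvert^2=\tfrac12\int_\Omega\lvert\nabla u\rvert^2$ for divergence-free fields where the paper invokes Korn's inequality.
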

	In this paper, $c$ denotes various constants independent of $m$.
	\begin{proof}[Proof of (i)]
		By taking $u_m$ as a test function in the weak formulation \eqref{weak_formulation}, we obtain
		\begin{equation}\label{p_i}
		\underbrace{\int_{0}^{T}\langle \partial_tu_m,u_m \rangle}_{:=I_m^1}+\underbrace{\int_{\Omega_T}^{}\tau_m(Du_m):Du_m}_{:=I_m^2}+
		\underbrace{\int_{\Omega_T}^{}(u_m\cdot\nabla)u_m\cdot u_m}_{:=I_m^3}=\underbrace{\int_{0}^{T}\langle f,u_m \rangle}_{:=I_m}\cdot
		\end{equation}
		Let us start with the integral $I_m^1$, note that $u_m \in E_{2,2}$, so we use the Lions-Magenes theorem \cite{Ref5}
		$$ \displaystyle 2\int_{0}^{T}\langle \partial_tu_m,u_m \rangle={\color{black}\lVert u_m(T)\rVert}_H^2-{\color{black} \lVert u_0\rVert }_H^2,$$
		then,
		\begin{equation}\label{I_1}
		\displaystyle I_m^1 \geq -\frac{1}{2}{\color{black} \lVert u_0\rVert }_H^2\cdot
		\end{equation}
		Now{\color{black},} we will prove the existence of a constant $k >0 $ independent of m, such that
		\begin{equation}\label{I_2}
		I_m^2 \geq k {\color{black} \lVert u_m\rVert }_{L^2(0,T;V)}\cdot
		\end{equation}
		The coercivity of the operator $\tau_m$ implies
		\begin{equation}\label{11}
		\int_{\Omega}^{}\tau_m(Du_m):Du_m \geq 2\mu {\color{black} \lVert Du_m\rVert } ^2_{L^2(\Omega)}\cdot
		\end{equation}
		On the other hand, the Korn inequality\footnote{For more details, see chapter 2 of \cite{Ref11}.}
		ensures the existence of $K_{\Omega}>0$ such that
		\begin{equation}\label{22}
		{\color{black} \lVert \nabla u_m\rVert }^2_{L^2(\Omega)} \leq K_{\Omega}{\color{black} \lVert Du_m\rVert }^2_{L^2(\Omega)}\cdot
		\end{equation}
		By integrating the inequality \eqref{11} on $[0,T]$, and using \eqref{22} we find \eqref{I_2}.
		
		For the third integral, we have
		\begin{equation}\label{I_3}
		\int_{\Omega}^{} (u_m\cdot\nabla)u_m\cdot u_m=\frac{1}{2}\sum_{i}^{}\int_{\Omega}^{} u^i_m\frac{\partial}{\partial x_i}{\color{black}\lvert u_m \rvert} ^2dx =-\frac{1}{2}\int_{\Omega}^{}\nabla\cdot u_m {\color{black}\lvert u_m \rvert} ^2 dx=0\cdot
		\end{equation}
		We also have
		$$\displaystyle \int_{0}^{T}\langle f,u_m \rangle dt \leq \int_{0}^{T} {\color{black} \lVert f\rVert }_{V'}{\color{black} \lVert u_m\rVert }_V dt\cdot $$
		Using the $\varepsilon$-Young inequality {\color{black}with}  
$\varepsilon=k$ (the same $k$ in \eqref{I_2}) we obtain
		
		\begin{equation}\label{I}
		I_m\leq \frac{1}{2\varepsilon}\int_{0}^{T} {\color{black} \lVert f\rVert }_{V'}^2 +\frac{\varepsilon}{2}\int_{0}^{T}{\color{black} \lVert u_m\rVert }_V^2\cdot \end{equation}
		From \eqref{I_1}, \eqref{I_2}, \eqref{I_3} and \eqref{I} we deduce
		$$\varepsilon{\color{black} \lVert u_m\rVert }_{L^2(0,T;V)} \leq c+\frac{\varepsilon}{2}{\color{black} \lVert u_m\rVert }_{L^2(0,T;V)}+\frac{1}{2}{\color{black} \lVert u_0\rVert }_H^2\cdot$$
		We conclude that $u_m$ is bounded in $L^2(0,T;V)$.\\
		 
		Now we will show that $u_m$ is bounded in $L^{\infty}(0,T;H)$. Let $\theta \in (0,T]$, then the function given by $\varphi_m=u_m1_{[0, \theta]}$,
		can be a test function in the weak formulation \eqref{weak_formulation} and we obtain
		
		\begin{equation}
		\underbrace{\int_{0}^{T}\langle \partial_tu_m,\varphi_m \rangle}_{:=J_m^1}+\underbrace{\int_{\Omega_T}^{}\tau_m(Du_m):D\varphi_m}_{:=J_m^2}+
		\underbrace{\int_{\Omega_T}^{}(u_m\cdot\nabla)u_m\cdot \varphi_m}_{:=J_m^3}=\underbrace{\int_{0}^{T}\langle f,\varphi_m \rangle}_{:=J_m}\cdot
		\end{equation}
		As proved in the first part of this proof, we use the Lions-Magenes theorem
		\begin{equation}\label{J_1}
		J_m^1=\int_{0}^{\theta}\langle \partial_tu_m,u_m \rangle =\frac{1}{2} {\color{black} \lVert u_m(\theta)\rVert }_H^2-\frac{1}{2}{\color{black} \lVert u_0\rVert }_H^2,
		\end{equation}
		moreover, we have
		\begin{equation}\label{JJ}
		\displaystyle J_m^3=\int_{0}^{\theta}\int_{\Omega}^{}(u_m\cdot\nabla)u_m\cdot u_m =0,
		\end{equation}
		and thanks to the coercivity, we get
		\begin{equation}\label{swiri}
		J_m^2 =\int_{0}^{\theta}\int_{\Omega}^{}\tau_m(Du_m):Du_m \geq 0\cdot
		\end{equation}
		
		By using the Hölder inequality and the boundedness of $u_m$ in $L^2(0,T;V)$, we obtain
		\begin{equation}\label{J}
		J_m\leq {\color{black} \lVert f\rVert }_{L^2(0,T;V')}{\color{black} \lVert u_m\rVert }_{L^2(0,T;V)} \leq c\cdot
		\end{equation}
		From \eqref{J_1}, \eqref{JJ}, \eqref{swiri} and \eqref{J}, we deduce
		\begin{equation}\label{estimation1.0}
		\displaystyle{\color{black} \lVert u_m(\theta)\rVert }_H^2 \leq c+{\color{black} \lVert u_0\rVert }_H^2, \hspace{20pt} \forall \theta \in [0,T]\cdot
		\end{equation}
		Since $c$ is independent of $\theta$, the sequence $\displaystyle u_m$ is bounded in $L^{\infty}(0,T;H)$.\\
	\end{proof}
	
	\begin{proof}[Proof of (ii)]
		To prove this point, we need the following lemma:
		\begin{lem}\label{xx}
			The space $\displaystyle L^2(0,T;V)\cap L^{\infty}(0,T;H)$ is continuously embedded into $L^4(\Omega_T)$.
		\end{lem}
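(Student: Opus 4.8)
The plan is to obtain this embedding by interpolation, using the two–dimensional Ladyzhenskaya inequality (a special case of Gagliardo--Nirenberg). Recall that since $\Omega\subset\R^2$ is bounded with Lipschitz boundary, there is a constant $C_\Omega>0$ such that for every $v\in H^1_0(\Omega)$
\begin{equation*}
\|v\|_{L^4(\Omega)}\leq C_\Omega\,\|v\|_{L^2(\Omega)}^{1/2}\,\|\nabla v\|_{L^2(\Omega)}^{1/2}.
\end{equation*}
First I would apply this at almost every fixed time $t\in(0,T)$ to $v=u(\cdot,t)$, where $u\in L^2(0,T;V)\cap L^\infty(0,T;H)$; this is legitimate because $u(\cdot,t)\in V\subset H^1_0(\Omega)$ for a.e.\ $t$, so $t\mapsto\|u(\cdot,t)\|_{L^4(\Omega)}$ is measurable.

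Next, raising the pointwise inequality to the fourth power, integrating over $(0,T)$, and pulling the $L^\infty(0,T;H)$ norm out of the time integral gives
\begin{equation*}
\int_0^T\|u(\cdot,t)\|_{L^4(\Omega)}^4\,dt\leq C_\Omega^4\int_0^T\|u(\cdot,t)\|_{H}^2\,\|\nabla u(\cdot,t)\|_{L^2(\Omega)}^2\,dt\leq C_\Omega^4\,\|u\|_{L^\infty(0,T;H)}^2\,\|u\|_{L^2(0,T;V)}^2.
\end{equation*}
Hence $\|u\|_{L^4(\Omega_T)}^4\leq C_\Omega^4\,\|u\|_{L^\infty(0,T;H)}^2\,\|u\|_{L^2(0,T;V)}^2$, so $u\in L^4(\Omega_T)$ and the inclusion is continuous from $L^2(0,T;V)\cap L^\infty(0,T;H)$, equipped with the norm $\|u\|_{L^2(0,T;V)}+\|u\|_{L^\infty(0,T;H)}$, into $L^4(\Omega_T)$, which is the assertion of the lemma.

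There is no deep obstacle here: the only genuinely two–dimensional ingredient is the exponent balance in the Ladyzhenskaya inequality (in $\R^3$ the same scheme would only yield an embedding into $L^{10/3}(\Omega_T)$, which would not suffice later on). The only minor points to keep track of are the measurability in time noted above and, if one prefers to work with the symmetric gradient, the replacement of $\|\nabla u\|_{L^2(\Omega)}$ by $\|Du\|_{L^2(\Omega)}$ via Korn's inequality \eqref{22}; both are routine. I would simply invoke the Ladyzhenskaya/Gagliardo--Nirenberg inequality from the standard references already cited in the paper (e.g.\ \cite{Ref5, Ref11}) rather than reprove it.
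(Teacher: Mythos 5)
Your argument is exactly the paper's: the pointwise inequality $\lVert v\rVert_{L^4(\Omega)}^2\leq c\,\lVert v\rVert_{L^2}\lVert v\rVert_{H^1_0}$ (cited there as Lemma 6.2 of \cite{Ref4}, which is the same two-dimensional Ladyzhenskaya estimate you invoke), raised to the appropriate power, integrated in time, with the $L^\infty(0,T;H)$ factor pulled out of the time integral. The proof is correct and matches the paper's route in every essential respect.
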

		Indeed, according to the lemma 6.2 \cite{Ref4} we have
		{\color{black}${\color{black}\lVert v\rVert}^2_{L^4(\Omega)}\leq c{\color{black}\lVert v\rVert}_{H^1_0}{\color{black}\lVert v\rVert}_{L^2}$, for any $v\in H^1_0(\Omega)\cdot$ Then we get}
		$${\color{black}\lVert v\rVert}^4_{L^4(\Omega_T)}\leq c {\color{black}\lVert v\rVert}_{L^{\infty}(0,T;H)}^2 {\color{black}\lVert v\rVert}_{L^2(0,T;V)}^2\cdot $$	
		So,  $\displaystyle L^2(0,T;V)\cap L^{\infty}(0,T;H)$ is continuously embedded into  $L^4(\Omega_T)$.\\
		Form lemma V.11 \cite{Ref5},
		\begin{equation}\label{vong}
		\int_{\Omega}^{}(u_m\cdot\nabla)u_m\cdot \varphi=-\int_{\Omega}^{}(u_m\cdot\nabla)\varphi\cdot u_m,  \hspace{20pt} \forall \varphi \in V\cdot
		\end{equation}
		Using Cauchy–Schwarz and Hölder inequalities, we obtain
		\begin{equation}\label{kk}
		\left\lvert \int_{\Omega}^{}(u_m\cdot\nabla)\varphi\cdot u_m  \right\lvert   \leq {\color{black} \lVert u_m\rVert }^2_{L^4}{\color{black} \lVert \nabla \varphi\rVert }_{L^2}\cdot
		\end{equation}
		Therefore, $$  {\color{black} \lVert (u_m\cdot\nabla)u_m\rVert }_{V'}\leq c {\color{black} \lVert u_m\rVert }^2_{L^4}\cdot$$
		Consequently,
		\begin{equation}\label{d}
		{\color{black} \lVert (u_m\cdot\nabla)u_m\rVert }^2_{L^2(0,T;V')}\leq c {\color{black} \lVert u_m\rVert }^4_{L^4(\Omega_T)}\cdot
		\end{equation}
		However, the sequence $u_m$ is bounded in $L^2(0,T;V)\cap L^{\infty}(0,T;H)$ and according to the Lemma \eqref{xx}, $u_m$ is bounded in $L^4(\Omega_T)$. Then $\displaystyle (u_m\cdot\nabla)u_m$ is bounded in $\displaystyle L^2(0,T;V')$.
	\end{proof}
	\begin{proof}[Proof of (iii)]
		Clearly, $(\tau_m(Du_m))_m$ is bounded in $L^2(\Omega_T)$. Indeed, we have
		$$\left\lvert \tau_m(Du_m)\right\rvert ^2\leq c(\tau_y^2+{\color{black}\lvert Du_m \rvert} ^2)\cdot $$
		Therefore{\color{black},} 
		$$ {\color{black} \lVert \tau_m(Du_m)\rVert }^2_{L^2}\leq c+c{\color{black} \lVert Du_m\rVert }^2_{L^2} \leq c+c{\color{black} \lVert u_m\rVert }^2_V \cdot$$
		by using the first estimation, we obtain
		\begin{equation}\label{yy}
		{\color{black} \lVert \tau_m(Du_m)\rVert }_{L^2(\Omega_T)} \leq c \cdot
		\end{equation}
	\end{proof}
	
	\begin{proof}[Proof of (iv)]
		Let us use again the weak formulation of the approximate problem. We have
		$$\left\lvert  \int_{0}^{T}\langle \partial_tu_m,\varphi \rangle\right\rvert   \leq \left\lvert  \int_{\Omega_T}^{}\tau_m(Du_m):D\varphi\right\rvert  +
		\left\lvert  \int_{\Omega_T}^{}(u_m\cdot\nabla)u_m\cdot
		\varphi\right\rvert  +\left\lvert  \int_{0}^{T}\langle f,\varphi \rangle\right\rvert\cdot $$
		
		By using the Hölder inequality for each integral, we obtain
		\begin{equation}\label{nnn}
		\left\{
		\begin{array}{cccc}
		\displaystyle \int_{0}^{T}\left\lvert \langle f,\varphi \rangle\right\rvert  \leq {\color{black} \lVert f\rVert }_{L^2(0,T;V')}{\color{black} \lVert \varphi\rVert }_{L^2(0,T;V)}, \\
		\displaystyle \int_{\Omega_T}^{}\left\lvert \tau_m(Du_m):D\varphi\right\rvert  \leq {\color{black} \lVert \tau_m(Du_m)\rVert }_{L^2(\Omega_T)}{\color{black} \lVert D\varphi\rVert }_{L^2(\Omega_T)}\cdot
		\end{array}
		\right.
		\end{equation}
		Thanks to $\eqref{kk}${\color{black},} and to the third estimation, we obtain
		\begin{equation}\label{ddm}
		\left\lvert  \int_{0}^{T}\langle \partial_tu_m,\varphi \rangle\right\rvert   \leq  c {\color{black} \lVert \varphi\rVert }_{L^2(0,T;V)}\cdot
		\end{equation}
		It follows that
		\begin{equation}\label{vv}
		{\color{black} \lVert \partial_tu_m\rVert }_{L^2(0,T;V')} \leq c\cdot
		\end{equation}
	\end{proof}
	\section{Passing to the limit}\label{sec:5}
	In this section, we will construct a weak solution of \eqref{P}-\eqref{Bingham_tensor} by using $\{u_m\}$ and some compactness results.
 \vspace{-0.5 cm}
	\begin{prop}\label{passage}
		The following convergence {\color{black}is} proved for subsequences which are denoted by $\displaystyle\{u_m\}$.
		\begin{itemize}
			\item [(i) ] $u_m \rightarrow u$\ \ \ weakly in $L^2(0,T;V)$ and {\color{black}weakly-*} in $L^{\infty}(0,T;H)$.
			\item [(ii) ] $\partial_t u_m \rightarrow \partial_t u$\ \ \ weakly in $L^2(0,T;V')$.
			\item [(iii) ] $(u_m\cdot\nabla)u_m \rightarrow (u\cdot\nabla)u$\ \ \ weakly in $L^2(0,T;V')$.
			\item [(iv) ] $\tau_m(Du_m) \rightarrow \tau(Du)$\ \ \ weakly in $L^2(\Omega_T)$.
		\end{itemize}
	\end{prop}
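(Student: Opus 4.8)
The plan is to extract convergent subsequences from the uniform bounds of Proposition~\ref{estimations} and then to identify each limit, the only genuinely delicate point being item (iv), the identification of the weak limit of the bi-viscosity tensors with the Bingham tensor $\tau(Du)$. I will carry out the four items in the order stated.

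For (i) and (ii): by Proposition~\ref{estimations}(i) the sequence $\{u_m\}$ is bounded in the reflexive space $L^2(0,T;V)$ and in $L^\infty(0,T;H)=\big(L^1(0,T;H)\big)'$, so by the Banach--Alaoglu theorem (and reflexivity of $L^2(0,T;V)$) there is a subsequence and a limit $u$ with $u_m\rightharpoonup u$ weakly in $L^2(0,T;V)$ and weakly-$*$ in $L^\infty(0,T;H)$; the two limits coincide because both imply convergence in $\mathcal D'(\Omega_T)$. Similarly Proposition~\ref{estimations}(iv) gives $\partial_t u_m$ bounded in $L^2(0,T;V')$, hence a weakly convergent subsequence with some limit $g$; since $\partial_t$ is continuous from $\mathcal D'$ to $\mathcal D'$ and $u_m\rightharpoonup u$, we get $g=\partial_t u$, so $u\in E_{2,2}$. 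Item (iii): the embedding $E_{2,2}\hookrightarrow L^2(0,T;V)$ together with $L^2(0,T;V)\cap L^\infty(0,T;H)\hookrightarrow L^4(\Omega_T)$ (Lemma~\ref{xx}) and the Aubin--Lions--Simon compactness lemma (with $V\hookrightarrow\hookrightarrow H\hookrightarrow V'$ and the bounds on $u_m$, $\partial_t u_m$) yields, after passing to a further subsequence, strong convergence $u_m\to u$ in $L^2(0,T;H)$, hence also in $L^2(\Omega_T)$ up to a subsequence pointwise a.e. Then for $\varphi\in L^2(0,T;V)$ one writes $\int_{\Omega_T}(u_m\cdot\nabla)u_m\cdot\varphi=-\int_{\Omega_T}(u_m\cdot\nabla)\varphi\cdot u_m$ as in~\eqref{vong}, and passes to the limit using strong $L^2(\Omega_T)$ convergence of one factor $u_m$ against the weak $L^4$ convergence of the other (or against $\nabla\varphi\in L^2$); together with the uniform $L^2(0,T;V')$ bound from Proposition~\ref{estimations}(ii), this identifies the weak $L^2(0,T;V')$ limit as $(u\cdot\nabla)u$.

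For (iv), Proposition~\ref{estimations}(iii) gives $\tau_m(Du_m)$ bounded in $L^2(\Omega_T)$, so up to a subsequence $\tau_m(Du_m)\rightharpoonup \chi$ weakly in $L^2(\Omega_T)$ for some $\chi$; the task is to show $\chi=\tau(Du)$ in the sense of~\eqref{Bingham_tensor2}. This is the main obstacle, and it is precisely the monotonicity/Minty-type argument. First one passes to the limit in the weak formulation~\eqref{weak_formulation}, using (i)--(iii), to see that $(u,\chi)$ satisfies~\eqref{weak_formulation1} with $\tau(Du)$ replaced by $\chi$; testing the limit equation with $\varphi=u$ and using the Lions--Magenes identity gives an energy relation for the limit. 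On the approximate level, testing~\eqref{weak_formulation} with $u_m$ and using $\int(u_m\cdot\nabla)u_m\cdot u_m=0$, one controls $\limsup_m\int_{\Omega_T}\tau_m(Du_m):Du_m$ by data and by $\liminf\|u_m(T)\|_H^2\ge\|u(T)\|_H^2$; comparing with the limit energy relation shows $\limsup_m\int_{\Omega_T}\tau_m(Du_m):Du_m\le\int_{\Omega_T}\chi:Du$. Now for arbitrary $B\in L^2(\Omega_T;\mathbb M^{2\times2})$ one uses the monotonicity inequality of Lemma~\ref{q}, namely $\big(\tau_m(Du_m)-\tau_m(B)\big):(Du_m-B)\ge 2\mu|Du_m-B|^2\ge0$, expands, and passes to the limit: the cross terms converge because $Du_m\rightharpoonup Du$ in $L^2$ and $\tau_m(Du_m)\rightharpoonup\chi$, while the delicate term $\int\tau_m(B):(Du_m-B)$ requires $\tau_m(B)\to\tau(B)$ strongly in $L^2(\Omega_T)$ — which holds by dominated convergence since $\tau_m(B)\to\tau(B)$ pointwise a.e.\ (the set $\{|B|\le\gamma_m\}$ shrinks to $\{B=0\}$ as $m\to\infty$, and $\tau_m(B)=\tau(B)$ off it with the uniform bound $|\tau_m(B)|\le\tau_y+2\mu|B|$). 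This yields $\int_{\Omega_T}(\chi-\tau(B)):(Du-B)\ge0$ for all $B$. Finally, a Minty trick — set $B=Du-\lambda\Phi$ for $\lambda>0$ and arbitrary $\Phi\in L^2(\Omega_T;\mathbb M^{2\times2})$, divide by $\lambda$, let $\lambda\downarrow0$, then replace $\Phi$ by $-\Phi$ — forces $\int_{\Omega_T}(\chi-\tau(Du)):\Phi=0$ for all $\Phi$, whence $\chi=\tau(Du)$ a.e. Here one must be slightly careful that $\tau$ in~\eqref{Bingham_tensor2} is the maximal monotone graph rather than a single-valued map where $Du=0$; the inequality $\int(\chi-\tau(B)):(Du-B)\ge0$ for all measurable $B$ is exactly the statement that $\chi$ is a selection of the Bingham graph evaluated at $Du$, which is what~\eqref{weak_formulation1} requires. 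This completes the identification and hence the proposition.
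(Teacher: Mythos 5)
Your treatment of (i)--(iii) matches the paper's: reflexivity and Banach--Alaoglu for (i), distributional continuity of $\partial_t$ for (ii), and Aubin--Lions plus the $L^4(\Omega_T)$ embedding and the antisymmetry identity \eqref{vong} for (iii). The divergence, and the problem, is in (iv), where you replace the paper's argument by a Minty monotonicity trick.

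The gap is in the final step of your Minty argument. You set $B=Du-\lambda\Phi$ in $\int_{\Omega_T}(\chi-\tau(B)):(Du-B)\ge 0$, divide by $\lambda$, and let $\lambda\downarrow 0$, claiming this ``forces $\int_{\Omega_T}(\chi-\tau(Du)):\Phi=0$ for all $\Phi$, whence $\chi=\tau(Du)$ a.e.'' This requires hemicontinuity of $\tau$ along the segment, i.e.\ $\tau(Du-\lambda\Phi)\to\tau(Du)$, and that is exactly what fails for the Bingham law: on the set $\Omega_T^0=\{Du=0\}$ one has $\tau(-\lambda\Phi)=-2\mu\lambda\Phi-\tau_y\Phi/\lvert\Phi\rvert\to-\tau_y\Phi/\lvert\Phi\rvert\neq 0$, so the limit of $\tau(Du-\lambda\Phi)$ is not the selection $\phi(Du)=0$ used in your strong-convergence step, and no equality $\chi=\tau(Du)$ can be extracted there --- indeed $\tau(Du)$ is not even single-valued on that set. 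The discontinuity of the constitutive law at $Du=0$ is the whole difficulty of the problem (it is the reason the paper cannot apply \cite{Ref2} directly), and your argument collapses precisely on that set. Your closing remark that the integral inequality ``is exactly the statement that $\chi$ is a selection of the Bingham graph at $Du$'' is the right idea but is asserted, not proved: passing from the integrated inequality $\int_{\Omega_T}(\chi-\tau(B)):(Du-B)\ge 0$ to the pointwise graph inclusion requires a localization (e.g.\ perturbing $B$ only on arbitrary measurable subsets of $\Omega_T^0$ and $\Omega_T^+$ separately, which on $\Omega_T^0$ yields $\lvert\int_E\chi:\Phi\rvert\le\tau_y\int_E\lvert\Phi\rvert$ and hence $\lvert\chi\rvert\le\tau_y$ there), and this is missing.

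For comparison, the paper avoids the hemicontinuity issue entirely: it first proves $\int_{\Omega_T}(\tau_m(Du_m)-\tau_m(Du)):(Du_m-Du)\to 0$ (Lemma \ref{minty}, using the same energy comparison you sketch), then invokes the uniform strong monotonicity of Lemma \ref{q} to upgrade this to \emph{strong} $L^2(\Omega_T)$ and a.e.\ convergence of $Du_m$ to $Du$. With a.e.\ convergence in hand, the identification is done pointwise: on $\Omega_T^+$ by continuity of $F(A)=(2\mu+\tau_y/\lvert A\rvert)A$ away from $0$ together with Vitali's theorem, and on $\Omega_T^0$ by a separate contradiction argument (the decomposition into $A_m^1,A_m^2,A_m^3$) showing $\lvert\xi\rvert\le\tau_y$. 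If you want to keep the Minty route, you must either carry out the localization of the integral inequality into the maximal monotone graph inclusion, or compute the actual $\lambda\downarrow 0$ limit on $\Omega_T^0$ and extract $\lvert\chi\rvert\le\tau_y$ from the surviving $\tau_y\int_{\Omega_T^0}\lvert\Phi\rvert$ term; as written, the step does not go through.
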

	Clearly, the function $u$ satisfy equation \eqref{weak_formulation1}. Moreover, It is easy to see that
	$\tau_m(Du_m)$ converges weakly to some $\xi$ in $L^2(\Omega_T)$ but the principal difficulty will be to show that $\xi$ is a Bingham tensor.
	\color{black}
	\begin{proof}[Proof of (i) ]
		The space $L^2(0,T;V)$ is reflexive, so from any bounded sequence, we can extract a subsequence which converges weakly in $L^2(0,T;V)$, then $u_m$ converges weakly to $u$ in $L^2(0,T;V)$.
		\color{black}
		On the other hand, the space $L^{1}(0,T;H)$ is separable \footnote{\color{black} For more details you can see \cite[Ch. 1]{Ref25} } which gives the {\color{black}weak-*} convergence in $L^{\infty}(0,T;H)$ of a subsequence of $u_m$, therefore we deduce (i).
		\color{black}
	\end{proof}
	\vspace{-0.5 cm}
	\begin{proof}[Proof of (ii).]
		We know that the differentiation operator with respect to time is continuous in the sense of distributions, it means $\partial_t{u_m} \longrightarrow \partial_t{u}${\color{black},} {\color{black}in the sense of distribution}. But we proved that $\partial_t{u_m}$ is bounded in $L^2(0,T;V')$ which implies the weak convergence in this space, therefore we deduce \color{black}(ii) \color{black} by the uniqueness of the limit in $\mathcal{D'}(\Omega_T)$.
	\end{proof}
	\vspace{-0.5 cm}
	\begin{proof}[Proof of (iii)]
		To prove this convergence we need the following strong convergence.
  \vspace{-0.5 cm}
		\begin{lem}\label{was1}
			The sequence $u_m$ converges strongly to $u$ in $L^2(0,T;H)$ and almost everywhere in $\Omega_T$.
		\end{lem}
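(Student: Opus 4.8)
The plan is to deduce this from a standard Aubin--Lions--Simon compactness argument, using only the a priori bounds already established in Proposition~\ref{estimations}. Recall that $V\subset H\subset V'$, where the first inclusion is \emph{compact}: indeed $\Omega$ is a bounded Lipschitz domain of $\R^2$, so $H^1_0(\Omega)$ is compactly embedded in $L^2(\Omega)$ by the Rellich--Kondrachov theorem, and $V$, $H$ are precisely the divergence-free closed subspaces of $H^1_0(\Omega,\R^2)$ and $L^2(\Omega,\R^2)$; the second inclusion $H\hookrightarrow V'$ is continuous once $H$ is identified with its own dual.

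By Proposition~\ref{estimations}, the sequence $u_m$ is bounded in $L^2(0,T;V)$ (part (i)) and $\partial_t u_m$ is bounded in $L^2(0,T;V')$ (part (iv)). Hence, by the Aubin--Lions--Simon lemma, the set $\{v\in L^2(0,T;V):\ \partial_t v\in L^2(0,T;V')\}$ being compactly embedded in $L^2(0,T;H)$, one may extract from $u_m$ a subsequence --- which, following the convention of Proposition~\ref{passage}, we still denote $\{u_m\}$ --- converging strongly in $L^2(0,T;H)$.

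It remains to identify the strong limit. Part (i) of Proposition~\ref{passage} gives $u_m\rightharpoonup u$ weakly in $L^2(0,T;V)$, hence a fortiori weakly in $L^2(0,T;H)$; by uniqueness of the weak limit, the strong $L^2(0,T;H)$ limit of the extracted subsequence must coincide with $u$. Finally, since $L^2(0,T;H)$ is isometrically identified with $L^2(\Omega_T)$, the strong convergence $u_m\to u$ in $L^2(\Omega_T)$ implies, after passing to one further subsequence, convergence almost everywhere on $\Omega_T$ --- the classical partial converse of the dominated convergence theorem.

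The argument is routine; the only points that genuinely deserve a line of justification are the compactness of the embedding $V\hookrightarrow H$ (which is what makes Aubin--Lions applicable here) and the identification of the strong limit with the function $u$ already produced in Proposition~\ref{passage}. The real difficulty of the paper lies elsewhere, namely in Proposition~\ref{passage}(iv) and in showing that the weak limit $\xi$ of $\tau_m(Du_m)$ is the Bingham tensor $\tau(Du)$; this lemma is precisely one of the compactness tools feeding into that later step.
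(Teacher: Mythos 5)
Your proposal is correct and follows essentially the same route as the paper: the paper also invokes the Aubin--Lions compactness lemma (Theorem 5.1 of its reference \cite{Ref4}) with the bounds on $u_m$ in $L^2(0,T;V)$ and on $\partial_t u_m$ in $L^2(0,T;V')$, then extracts a further subsequence converging almost everywhere in $\Omega_T$. Your write-up is in fact slightly more careful, since you explicitly justify the compactness of the embedding $V\hookrightarrow H$ and the identification of the strong limit with $u$, which the paper leaves implicit.
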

  \vspace{-0.5 cm}
  
		This lemma is based on the compactness lemma (Theorem 5.1 \cite{Ref4}).\\
		We have $\partial_t u_m \rightarrow \partial_t u$ weakly in $L^2(0,T;V')$ and $ u_m \rightarrow u$ weakly in $L^2(0,T;V)$, using the compactness lemma, we obtain the strong convergence of $u_m$ to $u$ in $L^2(0,T;H)$. Moreover, we can extract a subsequence which converges to $u$ almost everywhere in $\Omega_T$.
		
		Now, we have to prove the weak convergence of $(u_m\cdot\nabla)u_m $ to $u\cdot\nabla u$ in $L^2(0,T;V')$. Due to the lemma \ref{was1}, {\color{black}$u_m \rightarrow u$ a.e in $\Omega_T$}, then for all $i,j\in \{1,2\}$ we have
		\begin{equation}\label{zzzz}
		u_m^iu_m^j \longrightarrow u^iu^j,\hspace{15pt}  \text{ a.e in } \Omega_T\cdot
		\end{equation}
		We also have
		$$\int_{\Omega}^{}\left(u_m^iu_m^j\right)^2dx \leq {\color{black} \lVert u_m^i \rVert }^2_{L^4} {\color{black} \lVert u_m^j \rVert } ^2_{L^4}\cdot  $$
		Since, $u_m$ is a bounded sequence in $L^4(\Omega_T)$, (Lemma\eqref{xx}), we obtain
		$${\color{black} \lVert u_m^iu_m^j\rVert }_{L^2(\Omega_T)}\leq c \cdot $$
		Which gives, by applying \color{black} Lemma 1.3 \cite[p. 12]{Ref4}\color{black}, the following convergence
		\begin{equation}\label{cv}
		u_m^iu_m^j\rightarrow u^iu^j \text{\hspace{10pt}weakly in }L^2(\Omega_T).
		\end{equation}
		Let $\varphi \in L^2(0,T;V) $, then
		$\displaystyle \int_{\Omega_T}^{}u_m^i\partial_iu_m^j\varphi_j=-\int_{\Omega_T}^{}u_m^iu_m^j\partial_i\varphi_j$ (according to \eqref{vong}). \eqref{cv}, permits to conclude that
		
		$$\int_{\Omega_T}^{}u_m^iu_m^j\partial_i\varphi_j \longrightarrow \int_{\Omega_T}^{}u^iu^j\partial_i\varphi_j,\hspace{20pt} \text{as } m\to \infty\cdot$$
		Consequently,
		$$\int_{\Omega_T}^{}u_m^i\partial_iu_m^j\varphi_j \longrightarrow \int_{\Omega_T}^{}u^i\partial_iu^j\varphi_j,\hspace{20pt} \text{as } m\to \infty\cdot$$
		Finally, we proved that
		\begin{equation*}\label{dd}
		\displaystyle\int_{\Omega_T}^{}(u_m\cdot\nabla)u_m\cdot \varphi = \sum_{i,j}^{2} \int_{\Omega_T}^{}u_m^i\partial_iu_m^j\varphi_j
		\longrightarrow \sum_{i,j}^{2} \int_{\Omega_T}^{}u^i\partial_iu^j\varphi_j = \int_{\Omega_T}^{}(u\cdot\nabla)u\cdot \varphi, \ \
		\end{equation*}
		for all $\varphi \in L^2(0,T;V)$.
		It follows that $(u_m\cdot\nabla)u_m$ converges to $(u\cdot\nabla)u$ weakly in $L^2(0,T;V')$.
	\end{proof}
	\begin{proof}[Proof of (iv)]
		To prove the weak convergence of $\tau_m(Du_m)$ to $\tau(Du)$, we start by proving that $Du_m$ converges strongly to $Du$ in $L^2(\Omega_T)$ (so almost everywhere in $\Omega_T$).
		\begin{lem}\label{minty}
			$$\int_{\Omega_T}^{} (\tau_m(Du_m)-\tau_m(Du)):(Du_m-Du)dxdt \longrightarrow 0, \text{\hspace{20pt}as } m\to +\infty\cdot  $$
		\end{lem}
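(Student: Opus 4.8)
The plan is to run Minty's monotonicity trick. Expanding
$(\tau_m(Du_m)-\tau_m(Du)):(Du_m-Du)=\tau_m(Du_m):Du_m-\tau_m(Du_m):Du-\tau_m(Du):Du_m+\tau_m(Du):Du$,
it suffices to show that $\limsup_{m\to\infty}$ of the integral over $\Omega_T$ of the right-hand side is $\le 0$; indeed Lemma \ref{q} gives $(\tau_m(Du_m)-\tau_m(Du)):(Du_m-Du)\ge 2\mu\lvert Du_m-Du\rvert^2\ge 0$, so the integral is nonnegative and the limit is then exactly $0$. I treat the four terms separately.

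For the first term I would take $\varphi=u_m$ in the approximate weak formulation \eqref{weak_formulation}, as in the proof of Proposition \ref{estimations}(i): the convective term vanishes by \eqref{I_3}, and the Lions--Magenes formula gives
\begin{equation*}
\int_{\Omega_T}\tau_m(Du_m):Du_m=\int_0^T\langle f,u_m\rangle-\tfrac12\lVert u_m(T)\rVert_H^2+\tfrac12\lVert u_0\rVert_H^2.
\end{equation*}
Letting $m\to\infty$: $u_m\rightharpoonup u$ in $L^2(0,T;V)$ gives $\int_0^T\langle f,u_m\rangle\to\int_0^T\langle f,u\rangle$, while $u_m\rightharpoonup u$ in $L^2(0,T;V)$ together with $\partial_t u_m\rightharpoonup\partial_t u$ in $L^2(0,T;V')$ (Proposition \ref{passage}) and the continuity of the time-trace $v\mapsto v(T)$ from $E_{2,2}$ into $H$ give $u_m(T)\rightharpoonup u(T)$ in $H$, whence $\liminf_m\lVert u_m(T)\rVert_H^2\ge\lVert u(T)\rVert_H^2$. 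On the other hand, passing to the limit in \eqref{weak_formulation}, $u$ satisfies \eqref{weak_formulation1} with the weak $L^2(\Omega_T)$-limit $\xi$ of $\tau_m(Du_m)$ in place of $\tau(Du)$; testing this identity with $\varphi=u$ (the convective term again vanishes, and Lions--Magenes applies since $u\in E_{2,2}$) yields $\int_{\Omega_T}\xi:Du=\int_0^T\langle f,u\rangle-\tfrac12\lVert u(T)\rVert_H^2+\tfrac12\lVert u_0\rVert_H^2$. Subtracting, $\limsup_m\int_{\Omega_T}\tau_m(Du_m):Du_m\le\int_{\Omega_T}\xi:Du$.

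The remaining three terms rest on the fact that $\tau_m(Du)\to g$ strongly in $L^2(\Omega_T)$ for some $g$. Since $\gamma_m=\tau_y/(2\mu(m-1))\to 0$, for a.e. $(x,t)$ one has $\tau_m(Du)(x,t)=0$ when $Du(x,t)=0$ and, when $Du(x,t)\ne 0$, $\lvert Du(x,t)\rvert>\gamma_m$ for all large $m$, so $\tau_m(Du)(x,t)=(2\mu+\tau_y/\lvert Du\rvert)Du$; hence $\tau_m(Du)\to g$ a.e. Moreover $\lvert\tau_m(Du)\rvert\le 2\mu\lvert Du\rvert+2\tau_y$ (on $\{\lvert Du\rvert\le\gamma_m\}$, $2m\mu\lvert Du\rvert\le 2m\mu\gamma_m=\tfrac{m}{m-1}\tau_y\le 2\tau_y$), so $\lvert\tau_m(Du)\rvert^2\le c(\lvert Du\rvert^2+\tau_y^2)\in L^1(\Omega_T)$ and dominated convergence gives the strong convergence. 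Then $\tau_m(Du_m)\rightharpoonup\xi$ weakly with $Du$ fixed gives $\int_{\Omega_T}\tau_m(Du_m):Du\to\int_{\Omega_T}\xi:Du$; $\tau_m(Du)\to g$ strongly with $Du_m\rightharpoonup Du$ weakly in $L^2(\Omega_T)$ gives $\int_{\Omega_T}\tau_m(Du):Du_m\to\int_{\Omega_T}g:Du$; and $\int_{\Omega_T}\tau_m(Du):Du\to\int_{\Omega_T}g:Du$. Adding the four contributions, $\limsup_{m\to\infty}\int_{\Omega_T}(\tau_m(Du_m)-\tau_m(Du)):(Du_m-Du)\le\int_{\Omega_T}\xi:Du-\int_{\Omega_T}\xi:Du-\int_{\Omega_T}g:Du+\int_{\Omega_T}g:Du=0$, and together with the nonnegativity from Lemma \ref{q} this proves the lemma.

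I expect the first term to be the only real obstacle: testing with $u_m$ yields an energy \emph{identity} for each $m$, but only an \emph{inequality} survives in the limit (through weak lower semicontinuity of $\lVert\cdot\rVert_H$ at $t=T$), and the computation closes precisely because the matching inequality for the limit problem is in fact an identity, so the four contributions telescope to $0$. The one point needing care rather than routine estimates is the weak convergence $u_m(T)\rightharpoonup u(T)$ in $H$, i.e. the weak continuity of the time-trace at $t=T$ on $E_{2,2}$.
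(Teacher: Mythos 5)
Your proof is correct and is essentially the paper's own argument: both rest on the same ingredients (the energy identity via the Lions--Magenes theorem, weak lower semicontinuity of the $H$-norm at $t=T$, the strong $L^2(\Omega_T)$ convergence of $\tau_m(Du)$ to the pointwise limit \eqref{limsimple}, the weak--strong pairings, and the sign information from Lemma \ref{q}), the only difference being bookkeeping --- the paper tests the approximate equation once with $u_m-u$ and splits into $I_m^1$ and $I_m^2$, rather than expanding into four terms and invoking the limit equation tested with $u$. The one step you leave implicit is the identification $u(0)=u_0$, without which your two energy identities do not telescope but leave the uncancelled term $\frac{1}{2}\left(\lVert u_0\rVert_H^2-\lVert u(0)\rVert_H^2\right)$ on the wrong side of the inequality; it follows at once from your own trace-continuity argument applied at $t=0$ (the constant sequence $u_m(0)=u_0$ converges weakly in $H$ to $u(0)$), which is exactly how the paper closes this point.
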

		\begin{proof}
			Let us set the following notations :
			$$ I_m^1=\int_{\Omega_T}^{} \tau_m(Du_m):(Du_m-Du), \ \ I_m^2=\int_{\Omega_T}^{} \tau_m(Du):(Du_m-Du)\cdot $$
			We proved that $(u_m-u)\in L^2(0,T;V) $, so we can use $(u_m-u)$ as a test function in the weak formulation of the approximate problem, and we obtain
			\begin{equation*}
				\begin{split}
					\int_{0}^{T}\langle \partial_tu_m,u_m-u \rangle +\int_{\Omega_T}^{}\tau_m(Du_m):&D(u_m-u)+\int_{\Omega_T}^{}(u_m\cdot\nabla)u_m\cdot (u_m-u)\\
					&=\int_{0}^{T}\langle f,u_m-u \rangle,
				\end{split}
			\end{equation*}
			which implies that:
			$$I_m^1=\int_{\Omega_T}^{}\tau_m(Du_m):D(u_m-u)=J^1_m-J^2_m-J^3_m\cdot$$
			{\color{black}Where $$\displaystyle J^1_m =\int_{0}^{T}\langle f,u_m-u \rangle dt,
			\hspace{0.5cm}\displaystyle J^2_m =\int_{0}^{T}\int_{\Omega}^{}(u_m\cdot\nabla)u_m\cdot (u_m-u) dx dt,$$
			$$\text{and\ \ \ } \displaystyle J^3_m= \int_{0}^{T}\langle \partial_tu_m,u_m-u \rangle dt \cdot$$}
			{\color{black}Since}  $u_m \rightarrow u$ weakly in $L^2(0,T;V)${\color{black},} then $\displaystyle \lim_{m\rightarrow\infty}J^1_m =0 $.\\
			On other hand{\color{black},}   $\displaystyle J^2_m=-\int_{0}^{T}\int_{\Omega}^{}(u_m\cdot\nabla)u_m\cdot u $, and from convergence (iv),
			$$J^2_m \longrightarrow \int_{\Omega_T}^{}(u\cdot\nabla)u\cdot u =0\cdot$$
			For $J_m^3$, we use the Lions–Magenes theorem :
			$$ \frac{1}{2}{\color{black} \lVert u_m(T)-u(T)\rVert }^2_H =\int_{0}^{T}\langle \partial_t(u_m-u),u_m-u \rangle dt +
			\frac{1}{2}{\color{black} \lVert u_m(0)-u(0)\rVert }^2_H \cdot$$
			Moreover, $\displaystyle\int_{0}^{T}\langle \partial_tu,u_m-u \rangle dt \rightarrow 0$ as $m\rightarrow\infty$, this gives
			$$  \lim_{m\rightarrow\infty} \int_{0}^{T}\langle \partial_tu_m,u_m-u \rangle dt = \lim_{m\rightarrow\infty}
			\frac{1}{2}{\color{black} \lVert u_m(T)-u(T)\rVert }^2_H -
			\frac{1}{2}{\color{black} \lVert u_0-u(0)\rVert }^2_H\cdot $$
			To deduce that $u_0=u(0)$ in $H$, we will prove that $u_m(0) \longrightarrow u(0)$ weakly in $H$.\\
			We know that $E_{2,2}$ is continuously embedded into $C^0([0,T];H)$, then $u_m(0)$ is bounded in $H$. On the other hand, (i)  and (iii) of proposition \eqref{passage} imply that $u_m(0)$ converges weakly to $u(0)$ in $V'$. Consequently, we deduce that $u_m(0)\rightarrow u(0)$ weakly in $H$.
			Therefore, $\displaystyle\lim_{m\rightarrow\infty} J_m^3 \geq 0$, which implies that
			\begin{equation}
			\label{Im1} \lim_{m\rightarrow\infty} I_m^1 \leq 0\cdot
			\end{equation}
			
			Now, let us prove that $\displaystyle\lim_{m\rightarrow\infty} I_m^2 =0 $.
			We know that the sequence $u_m$ converges weakly to $u$ in  $L^2(\Omega_T)$, so, the sequence $Du_m$ converges to $Du$ in $D'(\Omega_T)$.
			In addition, $(Du_m)_m$ is bounded in $L^2(\Omega_T)$, then we deduce that  
			$$Du_m  \longrightarrow Du \text{\hspace{10pt}weakly in } L^2(\Omega_T)\cdot$$
			\color{black}
			On the other hand, $\tau_m(Du)$ converges strongly to $\phi$ in $L^2(\Omega_T)$, where:
			\begin{equation}\label{limsimple}
			\phi=\left\{
			\begin{array}{cc}
			\left(2\mu+\frac{\tau_y}{\mid Du \mid }\right)Du & \text{if } {\color{black}\lvert Du \rvert} >0, \\
			\\
			0   &\text{if }  Du=0\cdot
			\end{array}
			\right.
			\end{equation}
			Consequently
			\begin{equation}\label{ff}
			\lim_{m\rightarrow\infty}  \int_{\Omega_T}^{} (\tau_m(Du_m)-\tau_m(Du)):(Du_m-Du) \leq 0,
			\end{equation}
			which, with the strict monotonicity of $\tau_m$, gives
			$$\displaystyle \lim_{m\rightarrow\infty} \int_{\Omega_T}^{} (\tau_m(Du_m)-\tau_m(Du)):(Du_m-Du) =0\cdot $$
		\end{proof}
  \vspace{-0.5 cm}
		\begin{lem}
			$(Du_m)$ converges to $Du$ strongly in $L^2(\Omega_T)$ and a.e in $\Omega_T$.
		\end{lem}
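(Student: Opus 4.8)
The plan is to combine the uniform monotonicity estimate of Lemma~\ref{q} with the convergence established in Lemma~\ref{minty}. Lemma~\ref{q} gives the pointwise bound
$$\bigl(\tau_m(Du_m)-\tau_m(Du)\bigr):(Du_m-Du)\ \geq\ 2\mu\,{\color{black}\lvert Du_m-Du\rvert}^2 \qquad \text{a.e. in }\Omega_T,$$
with the constant $2\mu$ independent of $m$. Integrating this inequality over $\Omega_T$ and invoking Lemma~\ref{minty}, I would obtain
$$2\mu\,{\color{black}\lVert Du_m-Du\rVert}^2_{L^2(\Omega_T)}\ \leq\ \int_{\Omega_T}\bigl(\tau_m(Du_m)-\tau_m(Du)\bigr):(Du_m-Du)\,dxdt\ \longrightarrow\ 0$$
as $m\to+\infty$. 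Since $\mu>0$, this forces ${\color{black}\lVert Du_m-Du\rVert}_{L^2(\Omega_T)}\to 0$, i.e. $Du_m\to Du$ strongly in $L^2(\Omega_T)$.

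From the strong $L^2(\Omega_T)$ convergence I would then extract a subsequence (not relabelled, consistently with the convention of Proposition~\ref{passage}) along which $Du_m\to Du$ almost everywhere in $\Omega_T$, by the standard fact that an $L^2$-convergent sequence admits an a.e.-convergent subsequence. This is exactly the conclusion of the lemma.

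There is essentially no obstacle here: the whole argument is a one-line consequence of the uniform (in $m$) coercivity of the monotonicity difference, which was the very reason Lemma~\ref{q} was isolated, together with the already-proved vanishing of the monotonicity integral in Lemma~\ref{minty}. The only point deserving a word of care is that the constant $2\mu$ in Lemma~\ref{q} does not degenerate as $m\to\infty$ — it does not, since it comes from the genuine viscosity $\mu$ and not from the artificial viscosity $m\mu$ — so the passage to the limit is legitimate and yields strong convergence of the full symmetric gradient, which will be used in the next step to identify the weak limit $\xi$ of $\tau_m(Du_m)$ with the Bingham tensor $\tau(Du)$.
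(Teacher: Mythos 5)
Your argument is correct and is exactly the paper's own proof: integrate the uniform monotonicity bound of Lemma~\ref{q} over $\Omega_T$, apply Lemma~\ref{minty} to conclude strong convergence of $Du_m$ to $Du$ in $L^2(\Omega_T)$, and extract an a.e.\ convergent subsequence. Nothing further is needed.
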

  This lemma is a consequence of Lemma \ref{minty} and Lemma \ref{q}. Recall that we have the following inequality
		\begin{equation}\label{quasimono}
		2\mu {\color{black}\lvert Du_m-Du \rvert} ^2 \leq (\tau_m(Du_m)-\tau_m(Du)):(Du_m-Du)\cdot
		\end{equation}
		%Moreover, we know that
		% \begin{equation}\label{abdon}
		%  \rvert}   {\color{black} \lVert Du_m\rVert }_{L^2}-\parallel Du \parallel_{L^2}  \rvert}  ^2 \leq \parallel  \rvert} Du_m \rvert} - \rvert} Du \rvert}  \parallel_{L^2}^2=\int_{\Omega_T}^{}( \rvert} Du_m \rvert} - \rvert} Du \rvert} )^2,
		% \end{equation}
		% then by using Lemma \ref{minty}, we find
		%  \begin{equation}\label{zz}
		%    {\color{black} \lVert Du_m\rVert }_{L^2(\Omega_T)} \rightarrow \parallel Du\parallel_{L^2(\Omega_T)}, \hspace{15pt} \text{ as } m \to +\infty \cdot
		%  \end{equation}
		%
		% On the other hand $Du_m$ converges weakly to $Du$ in $L^2(\Omega_T)$, so
		%\begin{align}\label{zongo}
		%  {\color{black} \lVert Du_m-Du\rVert }^2_{L^2(\Omega_T)} & =\int_{\Omega_T}^{}(Du_m-Du):(Du_m-Du)\\
		%   & ={\color{black} \lVert Du_m\rVert }^2_{L^2(\Omega_T)}+\parallel Du\parallel^2_{L^2(\Omega_T)}-2\int_{\Omega_T}^{}Du_m:Du\\
		%   &\longrightarrow 2\parallel Du\parallel^2_{L^2(\Omega_T)}-2\int_{\Omega_T}^{}Du:Du\cdot
		%\end{align}
		Then, we deduce
		$$ \lim_{m\rightarrow \infty } {\color{black} \lVert Du_m-Du\rVert }^2_{L^2(\Omega_T)}\longrightarrow 0\cdot$$
		
		\color{black}
		We know that $\tau_m(Du_m)$ converges weakly to an element $\xi$ in $L^2(\Omega_T)$. So we must check that $\xi$ is a Bingham tensor.
		\color{black} The following proof is inspired by \cite{Ref7}, where Shelukhin et al. studies the Bingham problem
		with periodic boundary conditions.\color{black}\\
		
		We fix the following notations
		\begin{equation}\label{limbi}
		\Omega_T^+=\Omega_T\cap\{{\color{black}\lvert Du \rvert} >0\},\ \ \ \ \ \Omega_T^0=\Omega_T\cap\{{\color{black}\lvert Du \rvert} =0\}\cdot
		\end{equation}
		\textbf{Part 1: } Let us proof that ${\color{black}\lvert \xi \rvert} \leq \tau_y$ a.e in $\Omega_T^0$. Define
		$$ A=\Omega_T^0 \cap \{{\color{black}\lvert \xi \rvert} >\tau_y \}, \hspace{20pt} \varphi=\frac{\xi}{{\color{black}\lvert \xi \rvert} } 1_A,  \hspace{20pt} I=\int_{\Omega_T}^{}\xi:\varphi,$$
		$$I_m=\int_{\Omega_T}^{}\tau_m(Du_m):\varphi, \hspace{20pt}  a=I-\tau_y \text{{\color{black} meas}(A)}\cdot$$
		Suppose that $\text{{\color{black} meas}(A)} >0$, then $\displaystyle I=\int_{A}^{}{\color{black}\lvert \xi \rvert}  > \text{{\color{black} meas}(A)}\tau_y$, therefore $a>0$.\\
		On the other hand $I_m$ converges to $I$, i.e
		$$ \forall \varepsilon >0,\ \exists M(\varepsilon)\in\N :\ \ \forall m\geq M(\varepsilon),\ \ \ I-\varepsilon\leq I_m\leq I+\varepsilon \cdot $$
		We choose $\varepsilon=\frac{a}{2}$. Then, there exists $M(a)$, such that
		\begin{equation}\label{ddd}
		I_m\geq\frac{a}{2}+\tau_y \text{{\color{black} meas}(A)}, \hspace{20pt} \forall m\geq M(a)\cdot
		\end{equation}
		Let $m>\max(M(a),\eta)$, with $\displaystyle\eta=f_l\left(\frac{3\tau_y \text{{\color{black} meas}(A)}}{a}+1 \right)+1$, \color{black}where $f_l$ is the floor function. {\color{black}Furthermore}, we denote
		$$ A^1_m=\Omega_T\cap\{{\color{black}\lvert Du_m \rvert} \leq \gamma_m\},\ \ \ \ A^2_m=\Omega_T\cap\{\gamma_m < {\color{black}\lvert Du_m \rvert} \leq \gamma_{\eta}\}$$ $$\text{and\ \ \ } \ A^3_m=\Omega_T\cap\{{\color{black}\lvert Du_m \rvert} > \gamma_{\eta}\}.$$
		We have
		\begin{equation*}
			\begin{split}
				I_m=& \underbrace{\int_{A^1_m}^{}2m\mu Du_m:\varphi}_{:=I_m^1}+\underbrace{\int_{A^2_m}^{}\left(2\mu+\frac{\tau_y}{{\color{black}\lvert Du_m \rvert} }\right)
					Du_m:\varphi}_{:=I_m^2}\\ &+\underbrace{\int_{A^3_m}^{}\left(2\mu+\frac{\tau_y}{{\color{black}\lvert Du_m \rvert}} \right)Du_m:\varphi}_{:=I_m^3}\cdot
			\end{split}
		\end{equation*}
		Now calculate 
		\begin{equation}\label{basov}
		{\color{black}\lvert I_m^1 \rvert} \leq \int_{A^1_m\cap A}^{ }2m\mu {\color{black}\lvert Du_m \rvert}  \leq \frac{m}{m-1}\tau_y \text{{\color{black} meas}(}A^1_m\cap A\text{)}  ,
		\end{equation}
		\begin{equation}\label{basov2}
		\begin{split}
		{\color{black}\lvert I_m^2 \rvert}& \leq \tau_y \text{{\color{black} meas}(}A^2_m\cap A{)} +\int_{A^2_m\cap A}^{}2\mu {\color{black}\lvert Du_m \rvert}\\
		&\leq \frac{m}{m-1}\tau_y \text{{\color{black} meas}(}A^2_m\cap A\text{)} +2\mu \gamma_{\eta}\text{{\color{black} meas}(}\Omega_T{)} ,
		\end{split}
		\end{equation}
		\begin{equation}\label{basov3}
		{\color{black}\lvert I_m^3 \rvert} \leq \frac{m}{m-1}\tau_y \text{{\color{black} meas}(}A^3_m\cap A\text{)}+2\mu {\color{black} \lVert Du_m\rVert }_{L^2(\Omega_T)}\sqrt{\text{{\color{black} meas}(}A^3_m\cap A\text{)}} \cdot
		\end{equation}
		From (\ref{ddd}), (\ref{basov}), (\ref{basov2}) and (\ref{basov3}), we get
		\begin{equation}\label{wasssssssss}
		\begin{split}
		\frac{a}{2}+\tau_y \text{{\color{black} meas}(A)}&\leq\frac{m}{m-1}\tau_y \text{{\color{black} meas}(A)} +2\mu\gamma_{\eta}\text{{\color{black} meas}(}\Omega_T\text{)} \\
		&	+2\mu{\color{black} \lVert Du_m\rVert } _{L^2(\Omega_T)}\sqrt{\text{{\color{black} meas}(}A^3_m\cap A\text{)}} )\cdot
		\end{split}
        \end{equation}
		Due to the choice of $\eta$, we obtain $\displaystyle\gamma_{\eta}< \frac{a}{6\mu \text{{\color{black} meas}(}\Omega_T\text{)}} $, and we have
		$\text{{\color{black} meas}(}A\cap A^3_m\text{)} \rightarrow 0 $, so
		\begin{equation}\label{conta}
		\frac{a}{2}+\tau_y  \text{{\color{black} meas}(A)}\leq \tau_y  \text{{\color{black} meas}(A)} +\frac{a}{3}\cdot
		\end{equation}
		\color{black} Which is absurd, i.e $ \text{{\color{black} meas}(A)}=0 $, thus  ${\color{black}\lvert \xi \rvert} \leq \tau_y $ a.e in $\Omega_T^0$.\\
		\color{black}
		\textbf{Part 2: } Let us proof that $\xi=\tau(Du)$ a.e in $\Omega_T^+$.\\
		Set
		$$ B^1_m=\Omega_T^+ \cap \{{\color{black}\lvert Du_m \rvert} \leq\gamma_m\}\hspace{10pt} \text{and}\hspace{10pt}  B^2_m=\Omega_T^+ \cap \{{\color{black}\lvert Du_m \rvert} >\gamma_m\}.$$
		We have $$W_m:= \left\lvert \tau_m(Du_m)-\tau(Du)\right\rvert  1_{\Omega_T^+}=\left\lvert 2\mu Du_m 1_{B^1_m}+F(Du_m) 1_{B^2_m}-F(Du) 1_{\Omega_T^+} \right\rvert , $$
		where, $\displaystyle F(A)=\left(2\mu+\frac{\tau_y}{{\color{black}\lvert A\rvert} }\right)A$,
		then $$
		W_m\leq \frac{m}{m-1}\tau_y  1_{B^1_m}+\left\lvert F(Du_m) 1_{B^2_m}-F(Du) 1_{\Omega_T^+}\right\rvert  \cdot$$
		However, $Du_m \rightarrow Du$ a.e in $\Omega_T^+$ and the function $X\mapsto F(X)$ is continuous, then $F(Du_m) \rightarrow F(Du)$ a.e in $\Omega_T^+ $. On the other hand \color{black} $ 1_{B^1_m}\rightarrow 0 $ and $ 1_{B^2_m}\rightarrow  1_{\Omega_T^+} ${\color{black},} \color{black} which gives $W_m \rightarrow 0$, i.e.
		$$ \tau_m(Du_m) \rightarrow \tau(Du) \text{\ \ \ \  a.e in } \Omega_T^+\cdot$$
        Let $\psi \in L^{\infty}(\Omega_T)$ be such that $\psi_{\mid _{\Omega_T^0}}=0 $. Let $Q'\subset \Omega_T$, $\theta_m=\tau_m(Du_m):\psi${\color{black},} and $\theta=\tau(Du):\psi$.\\
    	Using the Hölder inequality, we obtain
		\begin{equation}
		\int_{Q'}^{}{\color{black}\lvert \theta_m  \rvert}
		\leq {\color{black}\lvert {\color{black}\lvert \psi \rvert}  \rvert} _{L^{\infty}(\Omega_T)}\sqrt{\text{{\color{black} meas}(}Q')}\left(\tau_y \sqrt{\text{{\color{black} meas}(}\Omega_T)}+2\mu{\color{black} \lVert Du_m\rVert } _{L^2(\Omega_T)}\right)\cdot
		\end{equation}
		Therefore $\theta_m$ is uniformly integrable on $\Omega_T$ and $\theta_m \rightarrow \theta $ a.e in $\Omega_T$.
		This gives, thanks to Vitali theorem,
		$\displaystyle \int_{\Omega_T^+}^{}\tau_m(Du_m):\psi \rightarrow \int_{\Omega_T^+}^{}\tau(Du):\psi $.\\
		On the other hand $\tau_m(Du_m)$ converges weakly to $\xi$ in $L^2(\Omega_T)$, then $\tau(Du)=\xi $ a.e in $\Omega_T^+$.\\
		Finally, we proved that $\tau_m(Du_m)$, converges weakly to a Bingham tensor and the proof is completed.
	\end{proof}
	\color{black}
	\section{Uniqueness of solutions}\label{sec:6}
	In this section{\color{black},} we will prove that the problem \eqref{P}-\eqref{Bingham_tensor} has a unique solution. To do this we are inspired by the uniqueness proof of the Newtonian Navier Stokes equation.\\
	We consider $u_1$ and $u_2$ to be two weak solutions of \eqref{weak_formulation1} and introduce $u=u_1-u_2$. Therefore, we obtain
	\begin{equation}\label{unicite1}
    \begin{split}
    \int_{0}^{T}\langle \partial_tu,\varphi \rangle +\int_{\Omega_T}^{}(\tau(Du_1)-\tau(Du_2))&:D\varphi\ +\int_{\Omega_T}^{}(u_1\cdot\nabla)u_1\cdot \varphi \\
    -\int_{\Omega_T}^{}(u_2\cdot\nabla)u_2&\cdot \varphi =0, \hspace{1cm}
    \forall \varphi \in L^2(0,T;V)\cdot
    \end{split}  
	\end{equation}
	 On the other hand, we have$$
	\int_{\Omega_T}^{}(u_1\cdot\nabla)u_1\cdot \varphi\ -\int_{\Omega_T}^{}(u_2\cdot\nabla)u_2\cdot \varphi = \int_{\Omega_T}^{}(u_2\cdot\nabla)u\cdot \varphi\ +\int_{\Omega_T}^{}(u\cdot\nabla)u_1\cdot \varphi \cdot
	$$
	Let $t\in (0,T)$. {\color{black} Taking the function $\displaystyle\varphi=u1_{[0,t]}$ in \eqref{unicite1}  yields}
	\begin{equation}\label{unicite2}
	\begin{split}
&\int_{0}^{t}\langle \partial_tu,u\rangle ds+\int_{0}^{t}\int_{\Omega}^{}(\tau(Du_1)-\tau(Du_2)):Du\ dx ds  
	\\
	&+\int_{0}^{t}\int_{\Omega}^{}(u\cdot\nabla)u_1\cdot u\ dx ds+\int_{0}^{t}\underbrace{\int_{\Omega}^{}(u_2\cdot\nabla)u\cdot u\ dx}_{=0} ds=0\cdot
	\end{split}
	\end{equation}
	Using {\color{black}the Lions-Magenes Theorem} we obtain
	\begin{equation}\label{zz}
    \begin{split}
     \frac{1}{2}{\color{black} \lVert u(s)\rVert }_{H}^2+\int_{0}^{t}\int_{\Omega}^{}(u\cdot\nabla)u_1\cdot u +&\int_{0}^{t}\int_{\Omega}^{}(\tau(Du_1)-\tau(Du_2)):Du \\
     &=\frac{1}{2}{\color{black} \lVert u(0)\rVert }_{H}^2\cdot
    \end{split}
	\end{equation}
	According to \eqref{kk} and Lemma\eqref{xx} we have
	\begin{equation}\label{srr}
	\int_{\Omega}^{}\left\lvert(u\cdot\nabla)u_1\cdot u\ \right\rvert
	dx \leq c {\color{black}\lVert u\rVert}_V {\color{black}\lVert u\rVert}_H {\color{black} \lVert u_1\rVert }_V\cdot
	\end{equation}
	Furthermore, we can easily prove the following inequality\footnote{ We can adapt the proof of the strict monotonicity of $\tau_m$.}
	\begin{equation}\label{taumonono}
	\left( \tau(A)-\tau(B) \right):(A-B) \geq 2\mu{\color{black}\lvert A-B \rvert}^2, \hspace{30pt} \forall A,B \in \mathbb{M}^{2\times 2}\cdot
	\end{equation}
	From \eqref{taumonono} and Korn's inequality we obtain
	\begin{equation}\label{badja}
	\int_{\Omega}^{}(\tau(Du_1)-\tau(Du_2)):Du\ dx \geq \frac{2\mu}{K_{\Omega}} {\color{black}\lVert u\rVert}^2_V \cdot
	\end{equation}
 	Thus,{\color{black}\begin{equation}\label{zb}
    \begin{split}
     \frac{1}{2}{\color{black} \lVert u(s)\rVert }_{H}^2+\frac{2\mu}{K_{\Omega}}\int_{0}^{t} {\color{black}\lVert u\rVert}^2_V  ds \leq &c\int_{0}^{t} {\color{black}\lVert u\rVert}_V {\color{black}\lVert u\rVert}_H {\color{black} \lVert u_1\rVert }_V \ ds \\
     & +\frac{1}{2}{\color{black} \lVert u(0)\rVert }_{H}^2\cdot
    \end{split}
	\end{equation}}
	Using Young’s inequality, we get
	\begin{equation}\label{zbb}
	{\color{black} \lVert u(s)\rVert }_{H}^2 \leq {\color{black} \lVert u(0)\rVert }_{H}^2+c\int_{0}^{t} {\color{black}\lVert u\rVert}_H^2 {\color{black} \lVert u_1\rVert }_V^2 \ ds\cdot
	\end{equation}
	Thanks to the Gronwall lemma, we deduce that
	$${\color{black} \lVert u(s)\rVert }_{H}^2 \leq {\color{black} \lVert u(0)\rVert }_{H}^2 \exp \left(c \int_{0}^{t} {\color{black} \lVert u_1\rVert }_V^2 \ ds\right),\ \ \ \ \ \forall t\in [0,T]\cdot$$
	Since $u(0)=0$, we get the uniqueness of the weak solutions.
	\begin{cor}[Energy equality]
		The solution $u$ is more {\color{black}than} a classical weak solution. In fact, we have $u\in C^0([0,T];H)$, moreover, {\color{black}for all $s_1, s_2 \in [0,T]$,} $u$ satisfies the following energy equality
		\begin{equation}\label{energy}
		\frac{1}{2}{\color{black} \lVert u(s_2)\rVert }_{L^2(\Omega)}^2+\int_{s_1}^{s_2}\int_{\Omega}^{}\tau(Du):Du=\int_{s_1}^{s_2}
		\langle f,u \rangle+ \frac{1}{2}{\color{black} \lVert u(s_1)\rVert }_{L^2(\Omega)}^2\cdot
		\end{equation}
	\end{cor}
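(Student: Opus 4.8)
The plan is to reproduce the energy computation carried out in the proof of Proposition~\ref{estimations}(i), but on an arbitrary subinterval $[s_1,s_2]\subset[0,T]$ (we may assume $s_1\le s_2$, the other case being obtained by relabelling), and this time tracking equalities rather than inequalities. First, since the weak solution $u$ belongs to $E_{2,2}$, the Lions--Magenes theorem \cite{Ref5} (equivalently, the continuous embedding $E_{2,2}\hookrightarrow C^0([0,T];H)$ already invoked in the third remark following Theorem~\ref{main_result}) gives $u\in C^0([0,T];H)$, so the pointwise values $u(s_1),u(s_2)\in H$ appearing in \eqref{energy} are well defined.

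Next I would take $\varphi=u\,1_{[s_1,s_2]}$ as a test function in the weak formulation \eqref{weak_formulation1}; this is legitimate because $u\in L^2(0,T;V)$ and multiplication by the indicator of a subinterval does not increase the $L^2(0,T;V)$ norm, so $\varphi\in L^2(0,T;V)$ (the very same cut-off device was already used in \eqref{unicite2}). This yields
\begin{equation*}
\int_{s_1}^{s_2}\langle\partial_t u,u\rangle
+\int_{s_1}^{s_2}\!\!\int_{\Omega}\tau(Du):Du
+\int_{s_1}^{s_2}\!\!\int_{\Omega}(u\cdot\nabla)u\cdot u
=\int_{s_1}^{s_2}\langle f,u\rangle\cdot
\end{equation*}
The stress term is finite, since $\tau(Du)\in L^2(\Omega_T)$ and $Du\in L^2(\Omega_T)$ give $\tau(Du):Du\in L^1(\Omega_T)$ by the Cauchy--Schwarz inequality. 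The convective term vanishes for a.e.\ $t$ exactly as in \eqref{I_3}: using $\nabla\cdot u=0$ and $u=0$ on $\partial\Omega$ one has $\int_{\Omega}(u\cdot\nabla)u\cdot u\,dx=\frac{1}{2}\int_{\Omega}u\cdot\nabla\lvert u\rvert^2\,dx=-\frac{1}{2}\int_{\Omega}(\nabla\cdot u)\lvert u\rvert^2\,dx=0$, hence $\int_{s_1}^{s_2}\int_{\Omega}(u\cdot\nabla)u\cdot u=0$.

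Finally, applying the Lions--Magenes theorem to $u\in E_{2,2}$ on the interval $[s_1,s_2]$ gives
\begin{equation*}
\int_{s_1}^{s_2}\langle\partial_t u,u\rangle
=\frac{1}{2}\lVert u(s_2)\rVert_{H}^2-\frac{1}{2}\lVert u(s_1)\rVert_{H}^2,
\end{equation*}
and substituting this into the previous identity produces exactly \eqref{energy}, since $\lVert\cdot\rVert_{H}=\lVert\cdot\rVert_{L^2(\Omega)}$.

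In truth the computation is routine; the only points deserving a word of justification are the admissibility of the cut-off test function $u\,1_{[s_1,s_2]}$ and the validity of the Lions--Magenes integration-by-parts formula on an arbitrary subinterval — equivalently, the fact that $t\mapsto\lVert u(t)\rVert_{H}^2$ is absolutely continuous on $[0,T]$ with $\frac{d}{dt}\lVert u(t)\rVert_{H}^2=2\langle\partial_t u(t),u(t)\rangle$ for a.e.\ $t$. Both are standard consequences of $u\in L^2(0,T;V)$ together with $\partial_t u\in L^2(0,T;V')$, so there is no genuine obstacle here; the upgrade from the energy \emph{inequality} obtained in Section~\ref{sec:4} to the energy \emph{equality} stated here stems simply from the fact that testing the equation with $u$ itself turns each of the inequalities used in Section~\ref{sec:4} into an equality.
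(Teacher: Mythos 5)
Your proposal is correct and follows exactly the paper's own (one-line) argument: test \eqref{weak_formulation1} with $\varphi=u\,1_{[s_1,s_2]}$, note that the convective term vanishes as in \eqref{I_3}, and apply the Lions--Magenes theorem to identify $\int_{s_1}^{s_2}\langle\partial_t u,u\rangle$ with $\frac{1}{2}\lVert u(s_2)\rVert_H^2-\frac{1}{2}\lVert u(s_1)\rVert_H^2$. You merely spell out the admissibility of the cut-off test function and the integrability of $\tau(Du):Du$, which the paper leaves implicit.
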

	To prove the energy equality, we have only to take $\displaystyle\varphi=u1_{[s_1,s_2]}$ as a test function in  \eqref{weak_formulation1} and use the Lions-Magenes theorem.
	\begin{cor}[Variational inequality]
		The weak solution given by Theorem \ref{main_result} satisfies the following variational inequality,  for all $\varphi\in L^2(0,T;V)$
		\begin{equation*}
			\int_{0}^{T}\langle \partial_tu,\varphi-u \rangle +\int_{\Omega_T}^{}(u\cdot\nabla)u\cdot \varphi+2\mu\int_{\Omega_T}^{}Du:D(\varphi-u) +\tau_y\int_{\Omega_T}^{}({\color{black}\lvert D\varphi \rvert} -{\color{black}\lvert Du \rvert} )
		\end{equation*}
		\begin{equation}\label{inequality}
		\geq \int_{0}^{T}\langle f,\varphi-u \rangle\cdot
		\end{equation}
	\end{cor}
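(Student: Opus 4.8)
The plan is to deduce the variational inequality from the weak formulation \eqref{weak_formulation1} by inserting the test function $\varphi-u$ and then estimating the viscoplastic stress term pointwise. Since the weak solution $u$ belongs to $L^2(0,T;V)$, the function $\varphi-u$ is admissible in \eqref{weak_formulation1} for every $\varphi\in L^2(0,T;V)$, which gives
\begin{equation*}
\int_{0}^{T}\langle \partial_tu,\varphi-u \rangle +\int_{\Omega_T}\tau(Du):D(\varphi-u) +\int_{\Omega_T}(u\cdot\nabla)u\cdot (\varphi-u) = \int_{0}^{T}\langle f,\varphi-u \rangle .
\end{equation*}
Since $u$ belongs to $L^2(0,T;V)\cap L^{\infty}(0,T;H)$, hence to $L^4(\Omega_T)$ by Lemma \ref{xx}, the convective term is well defined and the computation of \eqref{I_3}, now applied to $u$, gives $\int_{\Omega_T}(u\cdot\nabla)u\cdot u=0$; therefore $\int_{\Omega_T}(u\cdot\nabla)u\cdot (\varphi-u)=\int_{\Omega_T}(u\cdot\nabla)u\cdot \varphi$, and the identity above reads
\begin{equation*}
\int_{0}^{T}\langle \partial_tu,\varphi-u \rangle +\int_{\Omega_T}(u\cdot\nabla)u\cdot \varphi +\int_{\Omega_T}\tau(Du):D(\varphi-u) = \int_{0}^{T}\langle f,\varphi-u \rangle .
\end{equation*}

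The key ingredient is the pointwise inequality
\begin{equation*}
\tau(Du):(D\varphi-Du) \leq 2\mu\, Du:(D\varphi-Du) + \tau_y\,(|D\varphi|-|Du|) \qquad \text{a.e. in } \Omega_T .
\end{equation*}
On $\Omega_T^{+}$ one has $\tau(Du)=(2\mu+\tau_y/|Du|)\,Du$, so after cancelling the common term $2\mu\,Du:(D\varphi-Du)$ the claim reduces to $\frac{Du}{|Du|}:D\varphi\le |D\varphi|$, which is the Cauchy--Schwarz inequality. On $\Omega_T^{0}$ one has $Du=0$, so the right-hand side equals $\tau_y|D\varphi|$, while the left-hand side is $\tau(Du):D\varphi\le |\tau(Du)|\,|D\varphi|\le\tau_y|D\varphi|$, the last bound being the constraint $|\tau(Du)|\le\tau_y$ on $\Omega_T^{0}$ proved in Part~1 of the proof of Proposition \ref{passage}(iv). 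Integrating this inequality over $\Omega_T$ yields
\begin{equation*}
\int_{\Omega_T}\tau(Du):D(\varphi-u) \leq 2\mu\int_{\Omega_T}Du:D(\varphi-u) + \tau_y\int_{\Omega_T}(|D\varphi|-|Du|) .
\end{equation*}

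To conclude, write $S=\int_{0}^{T}\langle \partial_tu,\varphi-u \rangle+\int_{\Omega_T}(u\cdot\nabla)u\cdot \varphi$ and $F=\int_{0}^{T}\langle f,\varphi-u \rangle$; the second displayed identity of the first paragraph says $S+\int_{\Omega_T}\tau(Du):D(\varphi-u)=F$, and combining it with the last display gives $F\le S+2\mu\int_{\Omega_T}Du:D(\varphi-u)+\tau_y\int_{\Omega_T}(|D\varphi|-|Du|)$, which is exactly \eqref{inequality}. I do not expect a real obstacle: the only points requiring care are that $\tau(Du)$ is defined only implicitly on $\Omega_T^{0}$ — harmless since only the bound $|\tau(Du)|\le\tau_y$ there is used — and the integrability of the cubic convective term, which is guaranteed by the $L^4(\Omega_T)$ embedding of Lemma \ref{xx}; the rest is the algebra of the Bingham law and Cauchy--Schwarz.
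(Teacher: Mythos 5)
Your proposal is correct and follows essentially the same route as the paper: split $\Omega_T$ into $\Omega_T^{+}$ and $\Omega_T^{0}$, use Cauchy--Schwarz on $\Omega_T^{+}$ and the constraint $\lvert\tau(Du)\rvert\le\tau_y$ on $\Omega_T^{0}$ to obtain the bound \eqref{didi} on $\int_{\Omega_T}\tau(Du):D(\varphi-u)$, then combine with \eqref{weak_formulation1}. The only difference is presentational (you state the estimate pointwise before integrating and spell out the cancellation $\int_{\Omega_T}(u\cdot\nabla)u\cdot u=0$, which the paper leaves implicit).
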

	\begin{proof}
		Let us show the following inequality
		\begin{equation}\label{didi}
		\int_{\Omega_T}^{}\tau(Du):D(\varphi-u)\leq 2\mu \int_{\Omega_T}^{}Du:D(\varphi-u)+\tau_y\int_{\Omega_T}^{}{\color{black}\lvert D\varphi \rvert} -\tau_y\int_{\Omega_T}^{}{\color{black}\lvert Du \rvert} \cdot\end{equation}
		Using the Cauchy–Schwarz inequality and with the notation \eqref{limbi}, we obtain
		\begin{eqnarray*}\
			% \nonumber % Remove numbering (before each equation)
			\int_{\Omega_T^+}^{}\tau(Du):D(\varphi-u)& = &\int_{\Omega_T^+}^{}\left(2\mu+\frac{\tau_y}{{\color{black}\lvert Du \rvert} }\right)Du:D(\varphi-u)\\
			& =&2\mu \int_{\Omega_T}^{}Du:D(\varphi-u)+\tau_y \int_{\Omega_T^+}^{}\frac{Du:D\varphi}{{\color{black}\lvert Du \rvert} }- \tau_y \int_{\Omega_T}^{}{\color{black}\lvert Du \rvert} \hspace{40pt} \\
			& \leq &2\mu \int_{\Omega_T}^{}Du:D(\varphi-u)+\tau_y \int_{\Omega_T^+}^{}{\color{black}\lvert D\varphi \rvert} - \tau_y \int_{\Omega_T}^{}{\color{black}\lvert Du \rvert} \cdot
		\end{eqnarray*}
		We also have
		\begin{equation}\label{dddd}
		\int_{\Omega_T^0}^{}\tau(Du):D(\varphi-u)=\int_{\Omega_T^0}^{}\tau(Du):D\varphi \leq \tau_y \int_{\Omega_T^0}^{}{\color{black}\lvert D\varphi \rvert} \cdot
		\end{equation}
		Hence, we deduce the inequality \eqref{didi}. {\color{black}This} implies, jointly with \eqref{weak_formulation1}, the variational inequality.
	\end{proof}
	\begin{rem}
		\color{black}
		The inequality \eqref{inequality} implies that $u$ satisfies the variational inequality proposed by Lions and Duvaut in \cite{Ref14}, i.e.
       {\color{black}
		\begin{equation}\label{inequality2}
		\begin{split}
		\langle \partial_t u(t),\varphi-u(t) \rangle +&\int_{\Omega}^{}(u(t)\cdot\nabla)u(t)\cdot \varphi+2\mu\int_{\Omega}^{}Du(t):D(\varphi-u(t))\\
		&+\tau_y\int_{\Omega}^{}({\color{black}\lvert D\varphi \rvert} -{\color{black}\lvert Du(t) \rvert} ) \geq \langle f,\varphi-u(t) \rangle,
		\end{split}
		\end{equation}}
		for any $\varphi \in V$. The proof of this result is given in \cite[p. 300-301]{Ref14}.
		\color{black}
	\end{rem}
	\section{Conclusion and outlook}
	\label{conclusion and Remarks}
As mentioned in the introduction, this work aims to prove the existence of
the Navier Stokes equation solution for an incompressible homogeneous fluid that follows the Bingham model. In {\color{black}the} first step, we constructed an approximate problem using the bi-viscosity model, which behaves like a Newtonian fluid under weak stress and like a non-Newtonian fluid when the stress rate is great than the yield stress. After this approximation, we applied the theorem presented by Dreyfuss and Hungerbuhler in \cite{Ref2}, {\color{black}and} then a weak solution {\color{black}to} the problem in question was constructed by passing to the limit. This analysis shows that the conditions of Theorem 1 \cite{Ref2} {\color{black}is} sufficient but not necessary since the Bingham tensor does not satisfy them. Another essential advantage of our theorem is that the membership of the function $f$ to the space $L^2(\Omega_T)$ is not necessary (which is the case in \cite{Ref7, Ref8}).
The next objective is to extend Theorem \ref{main_result} to a thixotropic Bingham model, i.e., the yield strength is linearly dependent on the structural parameter, which follows a first-order rate equation taking into account the decay and accumulation of the material structure.
The study of the non-homogeneous case may also be the subject of future work. The convergence of the Bingham solution to The Newtonian solution, when $\tau_y\to 0$, can be proved. A long-term objective is to analyze the non-Newtonian Navier Stokes equation, more complicated than the Bingham model, as Herschel–Bulkley and Casson models.
    \bibliographystyle{abbrv}
	\bibliography{sn-bibliography}
\end{document}